\documentclass[pdflatex,sn-mathphys-ay]{sn-jnl}
\usepackage{graphicx}%
\usepackage{multirow}%
\usepackage{amsmath,amssymb,amsfonts}%
\usepackage{amsthm}%
\usepackage{mathrsfs}%
\usepackage[title]{appendix}%
\usepackage{xcolor}%
\usepackage{textcomp}%
\usepackage{manyfoot}%
\usepackage{booktabs}%
\usepackage{algorithm}%
\usepackage{algorithmicx}%
\usepackage{algpseudocode}%
\usepackage{listings}%

\usepackage{amsmath,amsfonts,amsthm,bm} % Math packages

\usepackage{graphicx} 
\usepackage{array}
\usepackage{textcomp}
\usepackage{stfloats}
\usepackage{mathabx}
\usepackage{url}
\usepackage{verbatim}
\usepackage{multirow}
\usepackage{multicol}
\usepackage{xcolor}
\usepackage{microtype}
\usepackage{longtable}
\usepackage{tabto}
\usepackage{tabularx}
\usepackage{amsmath}
\usepackage{float}
\usepackage{subcaption}
\usepackage{caption}
\usepackage{tikz}
\usepackage{threeparttable} 
\usepackage{todonotes}
\usepackage{placeins}

\usepackage{color}
\newcommand{\ncolor}[1]{\textcolor{blue}{#1}}  % Nathakhun's colored text
\newcommand{\mcolor}[1]{\textcolor{red}{#1}}  % Wasamon's colored text

\newcommand{\rcolor}[1]{\textcolor{red}{#1}}

\newcommand{\todoA}[1]{\todo[color=yellow!50]{#1}}

\newcommand{\bea}{\begin{eqnarray}}
\newcommand{\ena}{\end{eqnarray}}
\newcommand{\beas}{\begin{eqnarray*}}
\newcommand{\enas}{\end{eqnarray*}}
\newcommand{\beq}{\begin{equation}}
\newcommand{\enq}{\end{equation}}
\def\qed{\hfill \mbox{\rule{0.5em}{0.5em}}}
\newcommand{\bbox}{\hfill $\Box$}
\newcommand{\ignore}[1]{}

\newcommand{\nn}{\nonumber}

\newcommand{\E}{\mathbb{E}}
\newcommand{\Var}{\mathrm{Var}}

\newtheorem{theorem}{Theorem}[section]
\newtheorem{corollary}[theorem]{Corollary}

\newtheorem{proposition}[theorem]{Proposition}

\newtheorem{definition}[theorem]{Definition}
\newtheorem{example}[theorem]{Example}
\newtheorem{remark}[theorem]{Remark}

\begin{document}

\title[Article Title]{An approximate zero bias transformation for random sums: Applications to sampling with outliers, auto insurance, and generative AI}

%%=============================================================%%
%% GivenName	-> \fnm{Joergen W.}
%% Particle	-> \spfx{van der} -> surname prefix
%% FamilyName	-> \sur{Ploeg}
%% Suffix	-> \sfx{IV}
%% \author*[1,2]{\fnm{Joergen W.} \spfx{van der} \sur{Ploeg} 
%%  \sfx{IV}}\email{iauthor@gmail.com}
%%=============================================================%%

\author[1,3]{\fnm{Wasamon} \sur{Jantai}}\email{wasamon.j@chula.ac.th}
\equalcont{These authors contributed equally to this work.}

\author*[2,3]{\fnm{Nathakhun} \sur{Wiroonsri}}\email{nathakhun.wir@kmutt.ac.th}
\equalcont{These authors contributed equally to this work.}

\affil[1]{\orgdiv{Department of Mathematics
		and Computer Science}, \orgname{Chulalongkorn University}, \orgaddress{\state{Bangkok}, \country{Thailand}}}
\affil*[2]{\orgdiv{Department of Mathematics}, \orgname{King Mongkut's University of Technology Thonburi}, \orgaddress{\city{Bangkok}, \country{Thailand}}}

\affil[3]{Statistics, Probability, and Data Science with R programming (SPD$\epsilon$R) research group}

%%==================================%%
%% Sample for unstructured abstract %%
%%==================================%%

\abstract{
We develop $L^1$ bounds for the difference between a test function of a random sum and a standard normal random variable, where the summands are assumed to be independent but not necessarily identically distributed. The bounds are obtained through a new version of the approximate zero bias transformation specifically developed for random sums. Although the identical distribution assumption is relaxed, the bounds are of order $1/\sqrt{n}$, matching the order of existing bounds in the literature under the same distributional assumption on the number of summands. The main results are then applied to three real-world settings: random sums obtained from simple random sampling with outliers, total insurance claims, and generative AI response times.}

\keywords{Artificial Intelligence, Central Limit Theorem, Normal approximation, Stein's method, zero biasing }

%%\pacs[JEL Classification]{D8, H51}

\pacs[MSC Classification]{60F05}

\maketitle

\section{Introduction}\label{sec:introduction}

A random sum is a sum in which the number of summands is itself a random variable. It is written in the form
\bea \label{rs}
W = \sum_{i=1}^N X_i,
\ena
where $X_i$ are any random variables, and $N$ is a non-negative integer-valued random variable. In this work, we assume $X_i$ to be independent but are not necessarily identical, and $N$ is independent of all $X_i$. The random sum is very useful in modeling real-world quantities, such as total claim amount, aggregate losses, transaction volumes, total infections over a fixed time period, and total wait time of users or customers. The number of summands is random for these situations \citep{gnedenko2020random, YonghintJantai2025}.  

Distributional approximation of a random sum has been studied for decades \citep{robbins1948asymptotic,renyi1963central,sunklodas2015normal}.
%\cite{robbins1948asymptotic,renyi1963central,sunklodas2015normal}. 
Most of them assume that $X_i$ are identical. This assumption is a limitation for real-world modeling. For instance, in auto insurance, if the number of claims reaches a certain upper bound, the company may be stricter in paying the full amount requested by the auto service provider, leading to a new random claim amount. In a digital platform, wait time will be longer if the number of concurrent users exceeds a certain amount. In this work, we plan to relax the identical assumption and develop normal approximation methods for a random sum of independent nonidentical random variables using Stein's method based on the approximate zero biasing technique \citep{NW2017}. This technique has an extended application in \cite{JW25}.  

The main results are then applied to three real-world examples: sampling from a population with outliers, insurance claims, and generative AI response time. The most well-regarded sampling technique is simple random sampling. It is known that the standardized average always converges to the standard normal distribution. However, it is sometimes known that the population of interest contains outliers, which causes the number of non-outliers to be random \citep{barnett1994outliers}. The number of insurance claims is known to be random and normally modeled by a Poisson distribution %(see e.g. Klugman et al. \citeyear{klugman2012loss}; Denuit et al. \citeyear{denuit2007actuarial}). 
\citep{klugman2012loss,denuit2007actuarial}.
The claim amounts are typically assumed to be independent and identically distributed. However, the distribution of the claim amount may change during a year due to seasonality and the current number of claims. Generative AIs such as ChatGPT, Gemini, and Claude are now among the most popular tools for everyone. Their uses range from asking silly questions to helping code very difficult tasks. The wait time for answers depends on how simple the question is and how many concurrent users there are during the period. Though this exact same question has not been studied before in a probability and statistics context, statistical models used in Generative AI are summarized in \cite{dobriban2025statistical}.  

Stein's method was first introduced by Charles Stein in his seminal paper \citep{Stein72} and has become one of the most powerful methods to prove convergence in distribution. Its main advantages are that it provides non-asymptotic bounds on its target functions, which include distance and any other types, and that it can handle several situations involving dependence. For more details about the method in general, see the text \citep{CGS11} and the introductory notes \citep{Ross11}. The use of Stein's method for the normal approximation of random sums has been studied for more than 20 years \citep{barbour2006normal,chaidee2008berry}. More recently, \cite{Dal22} obtained an $L^1$ bound of order $O(1/\sqrt{n})$ under the assumption that the $X_i$ are independent and identically distributed.

\begin{comment}
\mcolor{The concept of Stein's method, particularly for normal approximation, is based on the fact that a random variable $W$ has the standard normal distribution, denoted $\mathcal{N}(0,1)$, if and only if
	\beas
	\E W f(W) = \E f'(W)
	\enas
	for all absolutely continuous functions $f$ with $\E |f'(W)| < \infty$. This equation, along with the form of the distance in \eqref{Wasdef}, leads to the differential equation
	\bea \label{steineq}
	h(w) - N h = f'_h(w)-wf_h(w),
	\ena
	where $Nh = \E h(Z)$ with $Z \sim \mathcal{N}(0,1)$, $h \in \mathcal{H}_1$, and $f_h$ is the solution to \eqref{steineq}. Taking the supremum over all $h \in \mathcal{H}_1$   in the expectation on the left-hand side of \eqref{steineq}, with $w$ replaced by a variable $W$, yields the distance between $W$ and $Z$ in \eqref{Wasdef}. Thus, one can handle the expectation on the right-hand side using the bounded solution $f_h$ of \eqref{steineq}, thereby avoiding direct computations of the distances.}

\end{comment}
%%%%%%%%%%

We recall that the $L^1$  distance between the distributions ${\cal L}(X)$ and ${\cal L}(Y)$ of real valued random variables $X$ and $Y$ is given, respectively, by
\bea \label{Wasdef}
d_1 \big({\cal L}(X),{\cal L}(Y)\big) &=& \int_{-\infty}^\infty |P(X \le t) - P(Y \le t)| dt \nn \\
&=& \sup_{h \in \mathcal{H}_1} |\E h(X)-\E h(Y)| 
\ena 
where $\mathcal{H}_1 =\{h: |h(y)-h(x)| \le |y-x|\}$. As previously mentioned, we provide an $L^1$ bound between a random sum given in \eqref{rs} and the standard normal, where we allow $X_i$ to be nonidentical by using an approximate zero bias transformation introduced in  \cite{NW2017}. We note that, as part of our generalization, the original zero biasing technique  \citep{GR97} used in \cite{EKJ09} is not applicable in our setting. 

We first recall the definition of a zero bias distribution. For a mean zero random variable $W$ with $\Var(W) = \sigma^2 < \infty$, $W^z$ has the zero-bias distribution with respect to $W$ if 
\bea \label{zerodef}
\E Wf(W) = \sigma^2\E f'(W^z)
\ena
for all absolutely continuous functions $f$ such that $\E |Wf(W)|<\infty$. The distribution of $W^z$ is known to be unique, and $W^z=W$ when $W$ is normally distributed. \cite{GR97} also presented a construction of a zero bias coupling for a sum of independent mean zero random variables by randomly selecting a summand based on its variance and replacing the selected summand by its zero bias variable. The approximate zero bias variable for the sum of independent mean zero random variables that we introduce in the next section is a variable $W^*$ defined similarly to $W^z$ with a remainder term added to the right hand side of \eqref{zerodef}.

The remainder of this work is organized as follows. The approximate zero biasing structure for random sums is introduced in Section \ref{main} along with the approximation results. Then the applications and simulations mentioned previously are presented in Section \ref{sec:app}. Section \ref{sec:sum} is devoted to discussion and conclusion.

\section{Approximate Zero Bias Transformation and Main Bounds} \label{main}

In this section, we introduce an approximate zero bias transformation for a random sum of independent mean zero random variables with random upper limit and develop a bound of the different between a test function of the sum and a standard normal based on this transformation.

\cite{GR97} presented a construction of a zero bias coupling for a sum of independent mean zero random variables by randomly selecting a summand based on its variance and replacing the selected summand by its zero bias variable.   

In this work, we generalize the idea of this construction with the sum replaced by a random sum. Let $X_1,X_2,\ldots$ be independent mean zero random variables such that $\Var(X_i) = \sigma_i^2$ and $N$ be a discrete random variable with support being a subset of $\mathbb{N}_0$ and independent of all $X_i$. Here, with an integer $k \ge 0$, we denote $\mathbb{N}_k$ as the set of positive integers starting at $k$. Now consider $W$ as given in \eqref{rs}.
It is not hard to show that $\sigma^2 := \Var(W) = \sum_{i=1}^\infty\sigma_i^2 P(N\ge i)$. We propose the following construction.

\textbf{
	\begin{flushleft}
		Construction of an approximate zero bias variable:
	\end{flushleft}
} \label{construction}
\begin{enumerate}
	\item For $i\in [n]$, let $X_i^*$ has the zero bias distribution of $X_i$ independent of $(X_j)_{j\ne i}$ and $(X_j^*)_{j\ne i}$.
	\item Choose a random summand $X_I$ with 
	\bea \label{Idef}
	P(I=i) = \frac{\sigma_i^2P(N'\ge i)}{\sigma^2},
	\ena  
	where $N'$ has the same distribution as $N$ and independent of $N$ and $I$ is independent of all other random variables.
	\item Define 
	\bea \label{Wsdef}
	W^* = \begin{cases}
		\sum_{j\ne I}X_j+X_I^*, &\text{ \ if \ } N \ge I,\\
		W,  &\text{ \ if \ } N<I.
	\end{cases}
	\ena
\end{enumerate}

Next we show that the construction above leads to a variable which is similar to a zero bias variable with an additional remainder term. We say the variable $W^*$ in \eqref{approxzero} has an \textsl{approximate zero bias distribution} with respect to $W$. It is clear that an approximate zero bias variable is not unique, as we can always make an adjustment to the remainder term and get a new approximate zero bias variable. However, the key idea of this approximate version is that we need to construct a variable in such a way that the remainder term is sufficiently small. The concept of the approximate zero bias coupling was first introduced in \cite{NW2017}, where the variable was constructed via an approximate Stein's coupling.     

\begin{proposition} \label{2.1}
	Let $X_1,X_2,\ldots$ be independent mean zero random variables such that $\Var(X_i) = \sigma_i^2$ and $W=\sum_{i=1}^NX_i$ with $N$ be a discrete random variable with support being subset of $\mathbb{N}_0$ and independent of all $X_i$. Then $W^*$, constructed as in \eqref{Wsdef}, satisfies
	\bea \label{approxzero}
	\E Wf(W) = \sigma^2\E f'(W^*) + \sigma^2\left(\E f'(W^*)-\E[f'(W)|N<I]\right)\frac{P(N<I)}{P(N\ge I)}.
	\ena
\end{proposition}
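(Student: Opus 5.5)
The plan is to first rewrite the right-hand side of \eqref{approxzero} algebraically, and then verify the resulting simpler identity by conditioning on $N$ and $I$.

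\textbf{Step 1: reduce the claim to a conditional identity.} Split on the event $\{N\ge I\}$. By \eqref{Wsdef}, $W^*=W$ on $\{N<I\}$, so
\[
\E f'(W^*)=\E[f'(W^*)\mathbf{1}\{N\ge I\}]+\E[f'(W)\mathbf{1}\{N<I\}].
\]
Substituting this into the right-hand side of \eqref{approxzero} and collecting the $\E f'(W^*)$ terms with the factor $1+P(N<I)/P(N\ge I)=1/P(N\ge I)$, the two terms involving $\E[f'(W)\mathbf{1}\{N<I\}]$ cancel. Hence the right-hand side equals $\sigma^2\,\E[f'(W^*)\mid N\ge I]$. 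It therefore suffices to prove
\[
\E Wf(W)=\sigma^2\,\E[f'(W^*)\mid N\ge I].
\]

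\textbf{Step 2: expand the left-hand side.} Condition on $N=n$, using the independence of $N$ from the $X_i$. Write $W_n=\sum_{j\le n}X_j$ and $W_n^{(i)}=W_n-X_i+X_i^*$. Apply the zero bias identity \eqref{zerodef} summand by summand, with $X_i$ independent of $(X_j)_{j\ne i}$, as in the construction of \cite{GR97}. This gives
\[
\E Wf(W)=\sum_{n}P(N=n)\sum_{i\le n}\sigma_i^2\,\E f'(W_n^{(i)})=\sum_{i\ge1}\sigma_i^2\,a_i,
\qquad a_i:=\sum_{n\ge i}P(N=n)\,\E f'(W_n^{(i)}).
\]
The interchange of sums is justified under $\sigma^2<\infty$, for instance for $f$ with bounded $f'$, by Fubini.

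\textbf{Step 3: expand the right-hand side.} Since $I$ is independent of $N$ and of the $X$'s, conditioning on $I=i$ and $N=n$ in \eqref{Wsdef} gives
\[
\E[f'(W^*)\mathbf{1}\{N\ge I\}]=\sum_i P(I=i)\,a_i .
\]
Then divide by $P(N\ge I)=\sum_i P(I=i)P(N\ge i)$.

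\textbf{Main obstacle.} The last step is to match $\sigma^2\sum_iP(I=i)a_i/P(N\ge I)$ with $\sum_i\sigma_i^2a_i$. This requires the conditional law $P(I=i\mid N\ge I)$ to be exactly $\sigma_i^2P(N\ge i)/\sigma^2$. Under \eqref{Idef} it is instead proportional to $\sigma_i^2P(N'\ge i)P(N\ge i)$, which carries an extra factor $P(N\ge i)$. So this is the delicate point, and I would check it first.

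If the bookkeeping does not close as stated, the fallback is to choose the selection weights so that the conditional law given $\{N\ge I\}$ is the variance-weighted one. One way is to use $P(I=i)\propto\sigma_i^2$, when $\sum_i\sigma_i^2<\infty$. Another is to let $I$ depend on $N$ through the indicator structure. With such weights, Steps 1–3 give \eqref{approxzero} verbatim.
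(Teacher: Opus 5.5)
Your Steps 1--3 are correct and follow the paper's skeleton. The paper also reduces to $\E Wf(W)=\sigma^2\E[f'(W^*)\mid N\ge I]$ and recovers \eqref{approxzero} by the law of total expectation. However, your ``main obstacle'' is not a delicate point to be checked: it is fatal, and the statement as written is false, so you should commit to that rather than hedge.

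In your notation, $\E Wf(W)=\sum_i\sigma_i^2a_i$, whereas
\[
\sigma^2\E[f'(W^*)\mid N\ge I]=\sum_i\sigma_i^2a_i\,\frac{P(N\ge i)}{P(N\ge I)},
\]
and the factor $P(N\ge i)/P(N\ge I)$ does not cancel. For a concrete failure, take:
\begin{itemize}
\item $X_i$ i.i.d.\ Rademacher, so $\sigma_i^2=1$ and $X_i^*\sim\mathrm{Unif}[-1,1]$;
\item $N$ uniform on $\{1,2\}$;
\item $f(x)=x^3/3$, or any $f$ agreeing with it on $[-2,2]$, so restricting to bounded $f'$ does not help.
\end{itemize}
Then $\sigma^2=3/2$, $P(I=1)=2/3$, $P(I=2)=1/3$, $P(N\ge I)=5/6$, $a_1=5/6$ and $a_2=2/3$. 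Hence
\[
\E Wf(W)=\tfrac13\E W^4=\tfrac32, \qquad \sigma^2\E[f'(W^*)\mid N\ge I]=\tfrac65\bigl(a_1+\tfrac12 a_2\bigr)=\tfrac75 .
\]
By your Step 1, the two sides of \eqref{approxzero} therefore differ.

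The paper's proof goes wrong exactly where you looked. It correctly reaches $\sum_i\sigma^2\E[f'(W-X_i+X_i^*)\mid N\ge i]\,P(I=i)$. It then rewrites this as $\sigma^2\E[f'(W-X_I+X_I^*)\mid N\ge I]$. That step silently uses $P(I=i)$ in place of $P(I=i\mid N\ge I)=P(I=i)P(N\ge i)/P(N\ge I)$.

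Your fallback $P(I=i)\propto\sigma_i^2$ does give the right conditional law, so Steps 1--3 then yield \eqref{approxzero}. But that proves a different proposition, and not a usable one, for two reasons:
\begin{itemize}
\item It needs $\sum_i\sigma_i^2<\infty$, which fails for i.i.d.\ summands with Poisson $N$.
\item It makes $P(N<I)=1-\sigma^2/\sum_j\sigma_j^2$. For i.i.d.\ summands with $N\sim\mathrm{Bin}(n,p)$ and $I$ uniform on $\{1,\dots,n\}$, this equals $1-p$. The remainder then does not vanish, and nothing like Proposition~\ref{prop:upperbound} can hold.
\end{itemize}

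The workable form of your second suggestion keeps $I$ as in \eqref{Idef} but redraws the number of summands. Given $I=i$, let $N^\dagger\sim\mathcal{L}(N\mid N\ge i)$, independent of the $X_j$ and $X_j^*$, and set $W^\dagger=\sum_{j\le N^\dagger,\,j\ne I}X_j+X_I^*$. Then $\sigma^2\E f'(W^\dagger)=\sum_i\sigma_i^2a_i=\E Wf(W)$ exactly, with no remainder. This is precisely what the paper's computation establishes before its faulty line. The cost moves into bounding $\E|W^\dagger-W|$, which now depends on how closely $N^\dagger$ can be coupled to $N$.
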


\proof
Letting $f$ satisfy all the conditions stated in \eqref{zerodef} and starting on the left hand side of the equation, we have
\beas
\E Wf(W) &=& \E \left[\sum_{i=1}^N X_i f(W-X_i+X_i)\right] \\
&=& \sum_{k=0}^\infty \sum_{i=1}^k\E\left[ X_i f\left(\sum_{j=1}^kX_j-X_i+X_i\right)\right] P(N=k) \\
&=& \sum_{k=0}^\infty \sum_{i=1}^k \sigma_i^2 \E\left[ f'\left(\sum_{j=1}^kX_j-X_i+X_i^*\right)\right] P(N=k)  \\
&=& \sum_{i=1}^\infty \sigma_i^2 \sum_{k=i}^\infty \E\left[  f'\left(\sum_{j=1}^kX_j-X_i+X_i^*\right)\right] P(N=k) \\
&=& \sum_{i=1}^\infty \sigma_i^2 P(N\ge i) \sum_{k=i}^\infty \E\left[  f'\left(\sum_{j=1}^kX_j-X_i+X_i^*\right)\right] P(N=k|N\ge i) \\
&=& \sum_{i=1}^\infty \sigma^2  \E\left[  f'\left(\sum_{j=1}^NX_j-X_i+X_i^*\right)\Bigg|N\ge i\right]P(I=i)  \\
&=& \sigma^2 \E\left[f'(W-X_I+X_I^*)|N\ge I\right] = \sigma^2 \E\left[f'(W^*)|N\ge I\right].
\enas

Next, we handle the right hand side of \eqref{zerodef}. We have

\beas
\sigma^2\E[f'(W^*)] = \sigma^2\E\left[f'(W^*)|N\ge I\right]P(N\ge I)+\sigma^2\E\left[f'(W)|N< I\right]P(N< I)
\enas

It follows that
\beas
\E Wf(W) &=& \sigma^2\E f'(W^*) + \sigma^2\E f'(W^*)\left(\frac{1-P(N\ge I)}{P(N\ge I)}\right)-\sigma^2\E[f'(W)|N<I]\frac{P(N<I)}{P(N\ge I)} \\
&=&  \sigma^2\E f'(W^*) + \sigma^2\left(\E f'(W^*)-\E[f'(W)|N<I]\right)\frac{P(N<I)}{P(N\ge I)} .
\enas

\bbox

\begin{remark}
	Let $g(x) = f(x/\sigma)$. Then $g'(x) = f'(x/\sigma)/\sigma$. If we standardize $W$ to $W/\sigma$ to have variance one first, we have
	\bea \label{approxzero2}
	&& \E\left[\frac{W}{\sigma}f\left(\frac{W}{\sigma}\right)\right] = \frac{1}{\sigma}\E[Wg(W)] \nn \\ 
	&=& \sigma \E g'(W^*) + \sigma(\E g'(W^*)-\E[g'(W)|N<I])\frac{P(N<I)}{P(N\ge I)} \nn\\
	&=&  \E f'\left(\frac{W^*}{\sigma}\right) + \left(\E f'\left(\frac{W^*}{\sigma}\right) -\E \left[f'\left(\frac{W^*}{\sigma}\right) \Big|N<I \right]\right) \frac{P(N<I)}{P(N\ge I)} .
	\ena
\end{remark}

Now we provide a bound for the remainder term in \eqref{approxzero}. 

\begin{proposition}\label{prop:upperbound}
	Let $N$ be a discrete random variable with support being a subset of $\mathbb{N}_0$ and $\E N = \lambda$, and $I$ be as in \eqref{Idef}. Assume that $0<\sigma_i^2<\infty$ for all $i$. Then
	\bea \label{remainbound1}
	P(N < I) \le \frac{2\pi^2 \max_i \sigma_i^2\sqrt{\Var(N)}}{3\lambda\min_i\sigma_i^2}.
	\ena
	In particular, if $N$ is Poisson with mean $\lambda$, then
	\bea \label{remainbound2}
	P(N < I) \le \frac{2\pi^2\max_i \sigma_i^2 \sqrt{\lambda}}{3\lambda\min_i\sigma_i^2}.
	\ena
\end{proposition}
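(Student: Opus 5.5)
The plan is to compute $P(N<I)$ exactly by conditioning on $I$, using that $I$ is independent of $N$ and that its law \eqref{Idef} involves only the independent copy $N'$. This gives
\[
P(N<I)=\sum_{i=1}^\infty P(I=i)P(N<i)=\frac{1}{\sigma^2}\sum_{i=1}^\infty \sigma_i^2\,P(N'\ge i)\,P(N<i).
\]
Two elementary estimates then separate the variances from the law of $N$. In the numerator, $\sigma_i^2\le \max_i\sigma_i^2$. In the denominator, $\sigma^2=\sum_i\sigma_i^2P(N\ge i)\ge \min_i\sigma_i^2\sum_{i\ge1}P(N\ge i)=\lambda\min_i\sigma_i^2$, by the tail-sum formula for $\E N$. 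Together these give
\[
P(N<I)\le \frac{\max_i\sigma_i^2}{\lambda\min_i\sigma_i^2}\sum_{i=1}^\infty P(N'\ge i)P(N<i),
\]
so it remains to bound the last sum by a constant multiple of $\sqrt{\Var(N)}$. Here $\min_i\sigma_i^2>0$ is needed, and I assume $\lambda>0$ so the bound is meaningful.

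The key step, and the only real content, is to recognise the series as an expectation. Since $N$ and $N'$ are independent,
\[
P(N'\ge i)P(N<i)=P(N<i\le N'),
\]
and summing over $i\ge1$ counts the integers in $(N,N']$. Hence
\[
\sum_{i=1}^\infty P(N'\ge i)P(N<i)=\E\,(N'-N)^+ .
\]
Because $N$ and $N'$ are i.i.d., $N'-N$ is symmetric. Applying the Cauchy--Schwarz inequality then gives
\[
\E\,(N'-N)^+=\tfrac12\E|N'-N|\le \tfrac12\sqrt{\E(N'-N)^2}=\tfrac12\sqrt{2\Var(N)}=\sqrt{\Var(N)/2}.
\]

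Since $1/\sqrt2\le 2\pi^2/3$, this yields \eqref{remainbound1}, in fact with a better constant. The constant $2\pi^2/3$ in the statement suggests the authors bound the series more crudely, perhaps via a Chebyshev-type split summed against $\sum 1/k^2$. The identification above avoids that, and I expect it to be the only nontrivial step. For the Poisson case, substituting $\Var(N)=\lambda$ gives \eqref{remainbound2} immediately.

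The one technical point to check is that all interchanges of sums are justified. Every term is nonnegative, so Tonelli's theorem applies. Finiteness of $\Var(N)$ may be assumed, since otherwise the bound is trivial.
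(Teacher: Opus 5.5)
Your proof is correct, and its first half matches the paper's. The paper computes $P(N\ge I)=\sigma^{-2}\sum_i\sigma_i^2P^2(N\ge i)$ and subtracts from one, which is equivalent to your conditioning on $I$. It then uses $\sigma^2\ge\lambda\min_i\sigma_i^2$ exactly as you do, reaching $\frac{\max_i\sigma_i^2}{\lambda\min_i\sigma_i^2}\sum_{i\ge1}P(N\ge i)P(N<i)$.

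The two routes split at this series. As you guessed, the paper bounds it termwise by Chebyshev. For $i$ at distance between $k\sqrt{\Var(N)}$ and $(k+1)\sqrt{\Var(N)}$ from $\E N$ the product is at most $1/k^2$, each $k$ is charged $4\sqrt{\Var(N)}$ indices, and $\sum_{k\ge1}4/k^2=2\pi^2/3$. You instead identify the series exactly as $\E(N'-N)^+=\tfrac12\E|N-N'|$ and finish with Cauchy--Schwarz. This gives the constant $1/\sqrt2\approx0.71$ in place of $2\pi^2/3\approx6.58$.

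Your identity buys more than a better constant: it removes bookkeeping that the band argument needs and that the paper's write-up skips.
\begin{itemize}
\item The central band $|i-\E N|<\sqrt{\Var(N)}$, where Chebyshev gives nothing, is not covered by the sum over $k\ge1$.
\item Bounding the number of integers in a band of length $\sqrt{\Var(N)}$ by $2\sqrt{\Var(N)}$ fails when $\Var(N)<1/4$.
\end{itemize}
Your argument has no such edge cases and proves \eqref{remainbound1} for every $N$ with finite variance. It also gives the exact value $P(N<I)=\E|N-N'|/(2\lambda)$ when all $\sigma_i^2$ are equal. That is precisely the quantity $1-\frac{1}{np}\sum_i P^2(N\ge i)$ the paper evaluates numerically in its applications.

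The Chebyshev route's only advantage is that it uses nothing beyond marginal tail bounds, so it could take sharper concentration inequalities for a specific $N$. For the variance bound as stated, yours is shorter and stronger.
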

\proof
Since
\beas
P(N\ge I) &=& \sum_{n=1}^\infty \sum_{i=1}^n P(N=n)P(I=i) \\
&=& \sum_{n=1}^\infty \sum_{i=1}^n P(N=n)\frac{\sigma_i^2}{\sigma^2}P(N' \ge i) \\
&=& \sum_{i=1}^\infty \frac{\sigma_i^2}{\sigma^2}P(N' \ge i) \sum_{n=i}^\infty P(N=n) \\
&=& \sum_{i=1}^\infty \frac{\sigma_i^2}{\sigma^2}P^2(N \ge i), 
\enas
we have
\beas
P(N<I) &=& 1-\sum_{i=1}^\infty \frac{\sigma_i^2}{\sigma^2}P^2(N \ge i)  \\
&=& \frac{1}{\sigma^2}\sum_{i=1}^\infty \sigma_i^2 (P(N\ge i) - P^2(N \ge i))  \\
&\le& \frac{\max_i\sigma_i^2}{\sum_{j=1}^\infty \sigma_j^2 P(N\ge j)}\sum_{i=1}^\infty P(N \ge i)P(N<i)\\
&=& \frac{1}{\lambda}\frac{\max_i \sigma_i^2}{\min_i\sigma_i^2 } \sum_{i=1}^\infty P(N \ge i)P(N<i).
\enas
Using Chebyshev's inequality, we know that for $\E N - (k+1)\sqrt{\Var(N)} < i \le \E N - k\sqrt{\Var(N)}$,
\beas
P(N < i ) \le \frac{1}{k^2} \text{ \ and thus \ } P(N \ge i)P(N<i) \le \frac{1}{k^2},
\enas
and for $\E N + k\sqrt{\Var(N)} \le i < \E N + (k+1)\sqrt{\Var(N)}$,
\beas
P(N \ge i ) \le \frac{1}{k^2} \text{ \ and thus \ } P(N \ge i)P(N<i) \le \frac{1}{k^2}.
\enas
Therefore,
\beas
P(N<I) \le \frac{1}{\lambda}\frac{\max_i \sigma_i^2}{\min_i\sigma_i^2 }  \sum_{k=1}^\infty \frac{4\sqrt{\Var(N)}}{k^2}  = \frac{2\pi^2 \max_i \sigma_i^2\sqrt{\Var(N)}}{3\lambda\min_i\sigma_i^2}.
\enas

\bbox

The following theorem and its corollary provide bounds for \eqref{Wasdef}.

\begin{theorem}\label{uniform}
	Let $X_1,X_2,\ldots$ be independent mean zero random variables such that $\Var(X_i) = \sigma_i^2$. Let $W=\sum_{i=1}^NX_i$, where $N$ is a discrete random variable with support being a subset of $\mathbb{N}_0$ and independent of all $X_i$.
		Define $\sigma^2 := \Var(W) = \sum_{i=1}^\infty\sigma_i^2 P(N\ge i)$. Let $I$ be as in \eqref{Idef}.  Then
	\begin{align*}
		\left|\E h\left(\frac{W}{\sigma}\right)-N(h)\right|\leq \frac{||f'_h||}{\sigma^3}\sum_{i=1}^\infty \left({1\over 2}\E|X_i|^3+ \sigma_i^2\E|X_i| \right)P^2(N\ge i)+2 ||f^{'}_{h}|| \cdot {P(N < I)},
	\end{align*}
	where $N(h) = \E h(Z)$ with $Z \sim \mathcal{N}(0,1)$, $h \in \mathcal{H}_1$ and $f_h$ is the solution to the Stein's equation.
	\end{theorem}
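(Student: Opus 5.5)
We plan to start from the Stein equation $f_h'(w)-wf_h(w)=h(w)-N(h)$ with $w=W/\sigma$, so that $\E h(W/\sigma)-N(h)=\E f_h'(W/\sigma)-\E[(W/\sigma)f_h(W/\sigma)]$. By \eqref{approxzero2}, with $f=f_h$, the second expectation equals $\E f_h'(W^*/\sigma)$ plus the remainder $\bigl(\E f_h'(W^*/\sigma)-\E[f_h'(W/\sigma)\mid N<I]\bigr)P(N<I)/P(N\ge I)$. Hence
\[
\Bigl|\E h\Bigl(\frac{W}{\sigma}\Bigr)-N(h)\Bigr|\le \Bigl|\E f_h'\Bigl(\frac{W}{\sigma}\Bigr)-\E f_h'\Bigl(\frac{W^*}{\sigma}\Bigr)\Bigr| + \text{(remainder)}.
\]
The remainder should not be bounded crudely by $2\|f_h'\|P(N<I)/P(N\ge I)$, because that keeps the denominator. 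Instead we return to the exact identity in the proof of Proposition \ref{2.1}, namely $\E Wf(W)=\sigma^2\E[f'(W^*)\mid N\ge I]$. Then
\[
\E f_h'\Bigl(\frac{W}{\sigma}\Bigr)-\E\Bigl[f_h'\Bigl(\frac{W^*}{\sigma}\Bigr)\Big| N\ge I\Bigr]
\]
splits, by conditioning $\E f_h'(W/\sigma)$ on $\{N\ge I\}$ and $\{N<I\}$, into two pieces:
\[
\E\Bigl[\Bigl(f_h'\Bigl(\frac{W}{\sigma}\Bigr)-f_h'\Bigl(\frac{W^*}{\sigma}\Bigr)\Bigr)\mathbf 1\{N\ge I\}\Bigr]\Big/P(N\ge I)
\]
and a term of the form $\bigl(\E[f_h'(W/\sigma)\mid N<I]-\E[f_h'(W/\sigma)\mid N\ge I]\bigr)P(N<I)$. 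The second piece is bounded by $2\|f_h'\|P(N<I)$, which gives the last term in the statement. The $1/P(N\ge I)$ in the first piece should cancel against the law of $I$ restricted to $\{N\ge I\}$, as shown next.

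For the main term, on $\{N\ge I\}$ we have $W^*-W=X_I^*-X_I$. We would avoid Lipschitz bounds on $f_h'$, which is not Lipschitz for general $h\in\mathcal H_1$ without extra regularity, and instead apply the Stein identity once more, as in the standard zero-bias Berry--Esseen argument. Write
\[
f_h'\Bigl(\frac{W}{\sigma}\Bigr)-f_h'\Bigl(\frac{W^*}{\sigma}\Bigr)=\Bigl[\frac{W}{\sigma}f_h\Bigl(\frac{W}{\sigma}\Bigr)-\frac{W^*}{\sigma}f_h\Bigl(\frac{W^*}{\sigma}\Bigr)\Bigr]+\Bigl[h\Bigl(\frac{W}{\sigma}\Bigr)-h\Bigl(\frac{W^*}{\sigma}\Bigr)\Bigr].
\]
Each bracket is controlled by $|W-W^*|/\sigma$: the second because $h$ is $1$-Lipschitz, the first through the mean value theorem using $\|f_h\|$ and $\|f_h'\|$, together with the bound $|W^*-W|\le |X_I^*|+|X_I|$. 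Alternatively, one can use smoothing with $\|f_h''\|\le 2$, the standard bound for $h\in\mathcal H_1$. The cleanest route is probably to follow Goldstein's $L^1$ zero-bias bound, which gives $d_1\le 2\E|W^*-W|$ for exact zero bias, and adapt it to this approximate setting.

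The computation of $\E[|X_I^*-X_I|\mathbf 1\{N\ge I\}]$ then runs as follows. Using $P(I=i)=\sigma_i^2P(N\ge i)/\sigma^2$ and the independence of $N$ and $I$,
\[
\E\bigl[|X_I^*-X_I|\mathbf 1\{N\ge I\}\bigr]=\sum_i \frac{\sigma_i^2}{\sigma^2}P^2(N\ge i)\,\E|X_i^*-X_i|.
\]
Independence of $X_i^*$ and $X_i$ gives $\E|X_i^*-X_i|\le \E|X_i^*|+\E|X_i|$. The zero-bias moment identity $\E|X_i^*|=\E|X_i|^3/(2\sigma_i^2)$ then yields $\sigma_i^2\E|X_i^*-X_i|\le \tfrac12\E|X_i|^3+\sigma_i^2\E|X_i|$. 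Dividing by $\sigma$ from the scaling produces the $1/\sigma^3$ factor and the $P^2(N\ge i)$ weights, matching the first term of the statement.

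The main obstacle is the first step: arranging the decomposition so that the difference term carries exactly $\|f_h'\|$ with no $1/P(N\ge I)$ factor, and so that the remainder contributes only $2\|f_h'\|P(N<I)$. The key is to compute everything on the event $\{N\ge I\}$ through the exact identity from the proof of Proposition \ref{2.1}, rather than through \eqref{approxzero2}. Obtaining the constant $\|f_h'\|$ in front of the difference term, rather than $\|f_h''\|$, requires the Stein-identity rewriting or Goldstein's argument above, and we would verify that carefully.
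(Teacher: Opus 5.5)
\paragraph{The $\|f_h'\|$ constant cannot be obtained.} The step you flag as the main obstacle cannot be carried out, because the inequality with $\|f_h'\|$ on the first term is false.

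Take $N\equiv n$ and Rademacher $X_i$. Then $\sigma^2=n$, $P(N<I)=0$, and the claimed right-hand side is $\tfrac32\|f_h'\|/\sqrt n$. Put $f(x)=\int_{-\infty}^x \epsilon(-1)^n\cos(\pi\sqrt n\,t)\psi(t)\,dt$, where $\psi$ is a smooth cutoff supported in $[-4,4]$ and equal to $1$ on $[-3,3]$, and set $h=f'-wf$. Choosing $\epsilon$ of order $n^{-1/2}$ makes $h\in\mathcal H_1$. Then $N(h)=0$, and $f=f_h$ because it is the unique bounded solution. Moreover $\|f_h'\|=\epsilon$ and $\|f_h\|=O(\epsilon/\sqrt n)$. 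Since $f_h'=\epsilon\psi$ at every point of the lattice $\{(2k-n)/\sqrt n\}$ that carries $W/\sigma$, you get $\E h(W/\sigma)-N(h)\ge\epsilon\bigl(8/9-O(n^{-1/2})\bigr)$. This is of order $n^{-1/2}$, while the claimed bound is of order $n^{-1}$.

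This explains why none of your routes gives $\|f_h'\|$:
\begin{itemize}
\item The mean value theorem applied to $wf_h(w)$ brings in $\|f_h\|$, an unbounded factor $|W|$, and the extra $\|h'\|$ from the $h$-bracket.
\item Smoothing and Goldstein's $d_1\le2\E|W^*-W|$ both give $\|f_h''\|$; that $2$ is exactly $\|f_h''\|\le 2\|h'\|$.
\end{itemize}
Your premise that $f_h'$ need not be Lipschitz is also wrong: for $h\in\mathcal H_1$, $\|f_h''\|\le 2$. The paper simply writes $|\E f_h'(W/\sigma)-\E f_h'(W^*/\sigma)|\le\sigma^{-1}\|f_h''\|\,\E|W^*-W|$. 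So its proof establishes the bound with $\|f_h''\|$ on the first term, and the $\|f_h'\|$ in the displayed statement is a typo. Corollary~\ref{coro2.5} follows from the $\|f_h''\|$ version together with $\|f_h''\|\le 2$. Your evaluation of $\E[|X_I^*-X_I|\mathbf 1\{N\ge I\}]$ via $\E|X_i^*|=\E|X_i|^3/(2\sigma_i^2)$ does match the paper.

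\paragraph{The decomposition leaves an uncancelled factor.} Write $A=\E[f_h'(W/\sigma)\mid N\ge I]$, $B=\E[f_h'(W^*/\sigma)\mid N\ge I]$ and $C=\E[f_h'(W/\sigma)\mid N<I]$. Your split is $(A-B)+(C-A)P(N<I)$. Its first piece equals $\E[(f_h'(W/\sigma)-f_h'(W^*/\sigma))\mathbf 1\{N\ge I\}]/P(N\ge I)$, and the factor $1/P(N\ge I)$ does not cancel. The weights $\sigma_i^2P^2(N\ge i)/\sigma^2$ sum to $P(N\ge I)$, not to $1$, so dividing genuinely inflates the first term. Your final computation silently drops this factor.

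The paper uses the split $(A-B)P(N\ge I)+(C-B)P(N<I)$ instead:
\begin{itemize}
\item The first piece is the unconditional $\E[f_h'(W/\sigma)-f_h'(W^*/\sigma)]$. It lives on $\{N\ge I\}$ automatically, because $W^*=W$ off that event.
\item For the second piece, note that $\E f_h'(W^*/\sigma)=B\,P(N\ge I)+C\,P(N<I)$. Hence the remainder term in \eqref{approxzero2} equals exactly $(B-C)P(N<I)$.
\end{itemize}
So the denominator you were worried about cancels inside the remainder itself, which gives $2\|f_h'\|P(N<I)$ directly.
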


	\proof 
	By Stein's equation and Proposition 2.1, we have
	\begin{align*}
&\left|\E h\left(\frac{W}{\sigma}\right)-N(h)\right| = \left|\E\left(f_{h}'(\frac{W}{\sigma})-(\frac{W}{\sigma})f_{h}(\frac{W}{\sigma})\right)\right|\\
&\leq \left|\E\left(f_{h}'(\frac{W}{\sigma})-f_{h}'(\frac{W^{*}}{\sigma})\right)\right|+\left| \E\left(f_{h}'\left(\frac{W^{*}}{\sigma}\right) \right)-\E\left(f_{h}'\left(\frac{W}{\sigma}\right)\mid N<I \right)\right|\cdot\displaystyle\frac{P(N<I)}{P(N\geq I)}\\
& \leq  \left|\E\left(f_{h}'(\frac{W}{\sigma})-f_{h}'(\frac{W^{*}}{\sigma})\right)\right|+\left| \E\left(f_{h}'\left(\frac{W^{*}}{\sigma}\right) \right)-\E\left(f_{h}'(\frac{W}{\sigma})\mid N<I \right)\right|\cdot\displaystyle\frac{P(N<I)}{P(N\geq I)}\\
&\leq  \frac{1}{\sigma} ||f_{h}^{''}||\E \left|W^{*}-W\right|+\left| \E\left(f_{h}'(\frac{W^{*}}{\sigma}) \right)-\E\left(f_{h}'(\frac{W}{\sigma})\mid N<I \right)\right|\cdot\displaystyle\frac{P(N<I)}{P(N\geq I)}.
\end{align*}
%By 
%the property that $||f_{k}^{''}||\leq 2||h_{k}^{'}||$ and 
%Proposition \ref{2.4} we have,
%\begin{align*}
%\left|\E\left(f_{h}'(\frac{W}{\sigma})-f_{h}'(\frac{W^{*}}{\sigma}\right)\right|\leq &\frac{1}{\sigma} ||f_{h}^{''}||\E \left|W^{*}-W\right|
%% \leq   \frac{1}{4\sigma^{3}}||f_{h}^{''}||\sum^{\infty}_{i=1}\E\left|X_{i}^{s}\right|^{3}P^{2}(N\geq i).
%\end{align*}
Notice that 
\beas
\E|W^*-W|&=& \E\left[|W^*-W| \mathbf{1}(N\ge I) \right]+\E\left[|W^*-W|\mathbf{1}(N< I)\right]\\
&=& \E\left[|X_I^*-X_I| \mathbf{1}(N\ge I) \right] \\
&=& \sum_{i=1}^\infty\E\left[|X_i^*-X_i| \mathbf{1}(N\ge i) \right] \frac{\sigma_i^2P(N'\ge i)}{\sigma^2} \\
&=& \sum_{i=1}^\infty\E|X_i^*-X_i| \,P(N\ge i) \frac{\sigma_i^2P(N'\ge i)}{\sigma^2} \\
&=& \frac{1}{\sigma^2}\sum_{i=1}^\infty \sigma_i^2\E|X_i^*-X_i| P^2(N\ge i)\\
&\le & \frac{1}{\sigma^2}\sum_{i=1}^\infty \sigma_i^2\left(\E|X_i^*|+\E|X_i| \right)P^2(N\ge i)\\
&\le & \frac{1}{\sigma^2}\sum_{i=1}^\infty \left({1\over 2 }\E|X_i|^3+ \sigma_i^2\E|X_i| \right)P^2(N\ge i)
\enas
and  
\beas
&&\left| \E\left(f_{h}'\left(\frac{W^{*}}{\sigma}\right) \right)-\E\left[f_{h}'\left(\frac{W}{\sigma}\right)\mid N<I \right]\right|\cdot\displaystyle\frac{P(N<I)}{P(N\geq I)}\\
&=&\Big| \E\left[f_{h}'\left(\frac{W^{*}}{\sigma}\right) \mid N<I\right]P(N<I)+\E\left[f_{h}'\left(\frac{W^{*}}{\sigma}\right) \mid N\ge I\right]P(N\ge I)\\
&&\quad\quad\quad\quad\quad\quad\quad\quad\quad\quad\quad\quad\quad\quad-\E\left[f_{h}'(\frac{W}{\sigma})\mid N<I \right]\Big|\cdot\displaystyle\frac{P(N<I)}{P(N\geq I)}\\
&=&\Big| \E\left[f_{h}'\left(\frac{W^{*}}{\sigma}\right) -f_{h}'\left(\frac{W}{\sigma}\right) \mid N<I\right]-\E\left[f_{h}'(\frac{W^*}{\sigma})\mid N<I \right]P(N\ge I)\\
&&\quad\quad\quad\quad\quad\quad+\E\left[f_{h}'(\frac{W^{*}}{\sigma}) | N\ge I\right]P(N\ge I)\Big|\cdot\displaystyle\frac{P(N<I)}{P(N\geq I)}\\
&=&\Big| \E\left[f_{h}'(\frac{W^{*}}{\sigma}) | N\ge I\right]P(N\ge I)-\E\left[f_{h}'(\frac{W^*}{\sigma})\mid N<I \right]P(N\ge I)
\Big|\cdot\displaystyle\frac{P(N<I)}{P(N\geq I)}\\
&\le & 2||f'_h||P(N<I).
\enas
Therefore, 
\begin{align*}
\left|\E h\left(\frac{W}{\sigma}\right)-N(h)\right|\leq  \frac{||f'_h||}{\sigma^3}\sum_{i=1}^\infty \left({1\over 2}\E|X_i|^3+ \sigma_i^2\E|X_i| \right)P^2(N\ge i)+2 ||f^{'}_{h}|| \cdot{P(N < I)}.
\end{align*}
\bbox

\begin{corollary} \label{coro2.5} Let $X_1,X_2,\ldots$ be independent mean zero random variables such that $\Var(X_i) = \sigma_i^2$. Let $W=\sum_{i=1}^NX_i$, where $N$ is  a discrete random variable with support being subset of $\mathbb{N}_0$, independent of $\{X_i\}_{i=1}^\infty$.
	Define $\sigma^2 := \Var(W) = \sum_{i=1}^\infty\sigma_i^2 P(N\ge i)$. Let $I$ be as in \eqref{Idef}. Then 
	$$d_1\left(\mathcal{L}(W/\sigma), \mathcal{L}(Z) \right)\le \frac{1}{\sigma^3}\sum_{i=1}^\infty \left(\E|X_i|^3+ 2 \sigma_i^2\E|X_i| \right)P^2(N\ge i)+2 \sqrt{\frac{2}{\pi}}\cdot\displaystyle P(N<I),$$  where $Z$ is a standard normal random variable.
\end{corollary}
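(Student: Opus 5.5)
The plan is to obtain Corollary \ref{coro2.5} directly from Theorem \ref{uniform}, by taking the supremum over $h \in \mathcal{H}_1$ and inserting the standard bounds on the solution $f_h$ of the Stein equation $f_h'(w) - w f_h(w) = h(w) - N(h)$. Since $d_1(\mathcal{L}(W/\sigma),\mathcal{L}(Z)) = \sup_{h\in\mathcal{H}_1}|\E h(W/\sigma) - N(h)|$ by \eqref{Wasdef}, it suffices to bound the right-hand side of Theorem \ref{uniform} uniformly in $h \in \mathcal{H}_1$.

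For Lipschitz $h$ with $\|h'\| \le 1$, the classical estimates (see \cite{CGS11}) are
\[
\|f_h'\| \le \sqrt{2/\pi}\,\|h'\| \qquad\text{and}\qquad \|f_h''\| \le 2\|h'\|.
\]
I use the bound on $\|f_h'\|$ to control the remainder term $2\|f_h'\|P(N<I)$, which gives $2\sqrt{2/\pi}\,P(N<I)$.

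For the first term, I go back to the proof of Theorem \ref{uniform}. There the main term arose as $\frac{1}{\sigma}\|f_h''\|\,\E|W^*-W|$, together with the estimate
\[
\E|W^*-W| \le \frac{1}{\sigma^2}\sum_{i=1}^\infty\Big(\tfrac12\E|X_i|^3 + \sigma_i^2\E|X_i|\Big)P^2(N\ge i).
\]
This estimate uses $\E|X_i^*| = \E|X_i|^3/(2\sigma_i^2)$, which follows from \eqref{zerodef} with $f(x) = x|x|/2$. The displayed statement of Theorem \ref{uniform} writes $\|f_h'\|$ in front of this term, but the derivation in its proof actually produces $\|f_h''\|$.

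Applying $\|f_h''\| \le 2$ to the first term gives
\[
\frac{2}{\sigma^3}\sum_{i=1}^\infty\Big(\tfrac12\E|X_i|^3 + \sigma_i^2\E|X_i|\Big)P^2(N\ge i) = \frac{1}{\sigma^3}\sum_{i=1}^\infty\big(\E|X_i|^3 + 2\sigma_i^2\E|X_i|\big)P^2(N\ge i),
\]
which is exactly the first term of the corollary. Adding the remainder bound $2\sqrt{2/\pi}\,P(N<I)$ and taking the supremum over $h \in \mathcal{H}_1$ completes the proof.

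The main obstacle is purely bookkeeping: the corollary's constant $2$ in front of the first term must come from $\|f_h''\| \le 2\|h'\|$, not from $\|f_h'\| \le \sqrt{2/\pi}$. I will therefore state explicitly that the first term in Theorem \ref{uniform} should carry $\|f_h''\|$, as its proof shows. The corollary then follows by direct substitution, and no new probabilistic estimate is required.
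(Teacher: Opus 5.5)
Your proposal is correct and is essentially the paper's own argument: the corollary is Theorem~\ref{uniform} with $\|f_h''\|\le 2$ used in the first term and $\|f_h'\|\le\sqrt{2/\pi}$ in the remainder, exactly as the paper does for its later corollaries. You are also right that the first term of Theorem~\ref{uniform} should carry $\|f_h''\|$, not $\|f_h'\|$, as its proof shows.

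One caveat inherited from the paper, which does not affect your step: the identity $\E Wf(W)=\sigma^2\E[f'(W^*)\mid N\ge I]$ in the proof of Proposition~\ref{2.1} does not hold in general. It equates $\sum_i P(I=i)\E[Y_i\mid N\ge i]$ with $\E[Y_I\mid N\ge I]=\sum_i \frac{P(I=i)P(N\ge i)}{P(N\ge I)}\E[Y_i\mid N\ge i]$, where $Y_i=f'(W-X_i+X_i^*)$. The bound of Theorem~\ref{uniform} nevertheless still holds, since $\E f'(W/\sigma)-\E[(W/\sigma)f(W/\sigma)]=\sum_i P(I=i)\{\E f'(W/\sigma)-\E[f'((W-X_i+X_i^*)/\sigma)\mid N\ge i]\}$. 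Splitting $\E f'(W/\sigma)$ over $\{N\ge i\}$ and $\{N<i\}$ then produces exactly the $\|f_h''\|\,\sigma^{-3}\sum_i\sigma_i^2P^2(N\ge i)\E|X_i^*-X_i|$ term and the $2\|f_h'\|P(N<I)$ term.
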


The following remark gives a special case for i.i.d. random variables and a binomial random variable $N$.
\begin{remark} \label{Remark:BN} Consider i.i.d. mean zero random variables $X_1,X_2, \dots$  such that $\Var(X_i) = \sigma^2_1$ for all $i$. Define $W$ as in \eqref{rs}, where $N$ is  a discrete random variable with support being subset of $\mathbb{N}_0$ and independent of all $X_i$, independent of $\{X_i\}_{i=1}^\infty$. Let $I$ be as in \eqref{Idef} and \\${\sigma}^2 := \Var(W) = \sum_{i=1}^\infty\sigma_i^2 P(N\ge i)=\sigma^2_1\E[N]$. If $N\sim Bin(n,p)$ has a binomial distribution with parameters $n$ and probability $p$, then, by Corollary \ref{coro2.5}, 

\begin{equation}  \label{Theoretical UB}
    d_1\left(\mathcal{L}(W/{\sigma}), \mathcal{L}(Z) \right)= \left({\E|X_1|^3 \over (\sigma^2_1 np)^{3/2}} +{2\sigma_1^2\E|X_1|\over (\sigma^2_1np)^{3/2}}\right)\sum_{i=1}^n P^2(N\ge i)  +2 \sqrt{\frac{2}{\pi}}\cdot\displaystyle P(N<I),\end{equation} 
    where $Z$ is a standard normal random variable.
\begin{comment}
\begin{equation}  \label{Theoretical UB}
    d_1\left(\mathcal{L}(W/{\sigma}), \mathcal{L}(Z) \right)= \left({\E|X_1|^3 \over (\sigma^2_1 np)^{3/2}} +{2\sigma_1^2\E|X_1|\over (\sigma^2_1np)^{3/2}}\right)\sum_{i=1}^n P^2(N\ge i)  +{2\sqrt{2} \over \sqrt{\pi}}\cdot\displaystyle{\left(1-{1\over np}\sum_{i=1}^nP^2(N\ge i)\right)},\end{equation} \todo{Modify}
    where $Z$ is a standard normal random variable.
\end{comment}

In comparison, \cite{Dal22} showed that 
\begin{equation} \label{Daly's Theoretical UB}
d_1\left(\mathcal{L}(W/{\sigma}), \mathcal{L}(Z) \right)\le \frac{2}{\sqrt{np\sigma^2_1}}\left(\frac{\E|X_1|^3}{2\sigma^2_1}+p\E|X_1| \right).
\end{equation}

It follows immediately from \eqref{Theoretical UB} that the first term is $O(1/\sqrt{n})$. By Proposition \ref{prop:upperbound}, the second term is also $O(1/\sqrt{n})$. Hence, the two terms have the same convergence rate. Although the bounds in \eqref{Theoretical UB} and \eqref{Daly's Theoretical UB} are both of order $O(1/\sqrt{n})$, the constant of  \eqref{Daly's Theoretical UB} is actually smaller. We show a numerical example in Table \ref{tab:daly_comparison} to demonstrate it. However, our bound in \eqref{Theoretical UB} does not require the identical assumption, and it will benefit users as illustrated through the applications in Section \ref{sec:app}.  
\end{remark}

\newpage
\begin{table}[htbp]
\centering
\begin{tabular}{|c|c|c|c|c|c|}
\hline
\multirow{2}{*}{$n$}
& Upper bounds
& \multirow{2}{*}{Daly's upper bounds}
& \multirow{2}{*}{$n$}
& Upper bounds
& \multirow{2}{*}{Daly's upper bounds}
\\
& from \eqref{Theoretical UB}
&
&
& from \eqref{Theoretical UB}
&
\\
\hline
$2^1$  & 3.2140 & 3.9315 & $2^{11}$ & 0.1919 & 0.1229 \\
$2^2$  & 2.8350 & 2.7800 &$2^{12}$ & 0.1364 & 0.0869 \\
$2^3$  & 2.3179 & 1.9658 &$2^{13}$ & 0.0968 & 0.0614 \\
$2^4$  & 1.8025 & 1.3900 &$2^{14}$ & 0.0686 & 0.0434 \\
$2^5$  & 1.3579 & 0.9829 &$2^{15}$ & 0.0486 & 0.0307 \\
$2^6$  & 1.0021 & 0.6950 &$2^{16}$ & 0.0344 & 0.0217 \\
$2^7$  & 0.7296 & 0.4914 &$2^{17}$ & 0.0243 & 0.0154 \\
$2^8$  & 0.5264 & 0.3475 &$2^{18}$ & 0.0172 & 0.0109 \\
$2^9$  & 0.3775 & 0.2457 &$2^{19}$ & 0.0122 & 0.0077 \\
$2^{10}$ & 0.2696 & 0.1738 &$2^{20}$ & 0.0086 & 0.0054 \\
\hline
\end{tabular}
\caption{Comparison of the upper bounds derived from  \eqref{Theoretical UB} and  \eqref{Daly's Theoretical UB}. The upper bounds are computed from \( X_i = Y_i - \mathbb{E}[Y_i] \), \( Y_i \sim \mathrm{Gamma}(k = 0.8203, \theta = 10962.0161) \), 
and \( N \sim {Bin}(n, 2583/8753) \).}
\label{tab:daly_comparison}
\end{table}

\section{Applications} \label{sec:app}

In this section, we demonstrate real-life applications of our theoretical results, including random sampling with outliers, insurance claims, and generative AI response times.

\subsection{Simple random sampling with outliers}

\begin{comment}
The non-outlier elements are i.i.d.\ copies of a centered exponential random variable, denoted by
\[
X_1, \dots, X_N \overset{\text{i.i.d.}}{\sim} A,
\]
while the outliers are i.i.d.\ copies of a different distribution,
\[
X_{N+1}, \dots, X_{N+m} \overset{\text{i.i.d.}}{\sim} O.
\]
The details of \(A\) and \(O\) are provided in the next subsection.
Moreover, the non-outlier sequence and the outlier sequence are assumed to be independent.    
\end{comment}

In this subsection, we apply our main results to simple random sampling with outliers. The traditional zero bias transformation does not work in this case due to extra terms caused by outliers.

\subsubsection{Theoretical results}
Let $m,N \in \mathbb{N}$ such that $m \ll N$. We consider a finite population of size $N+m$,
\[
\mathcal{A}=\{x_1,\dots,x_N,\,x_{N+1},\dots,x_{N+m}\},
\]
consisting of $N$ non-outliers and $m$ outliers. A simple random sample of size $n \in \mathbb{N}$ is drawn without replacement from $\mathcal{A}$. Let $M$, independent of all $X_i$, be the number of non-outliers in the sample. Then $M$ has a hypergeometric distribution, denoted by,
\[
M \sim H(N+m,\, n,\, N), \text{ and }
\mathbb{E}[M] = \frac{nN}{N+m}.
\]
The support of $M$ is $\max\{0,n-m\} \le M\le \min\{n,N\}$. After reordering the sample, we write
\[
\mathbf{X} = (X_1,\dots,X_M, X_{M+1},\dots,X_n),
\]
where the first $M$ components are non-outliers and the remaining $n-M$ are outliers.

\begin{comment}
 \[
X_1,\dots,X_M \stackrel{d}{=} A,
\qquad
X_{M+1},\dots,X_p \stackrel{d}{=} O.
\]   
\end{comment}

Let
\[
W = \sum_{i=1}^{M} X_i.
\]
Then, denoting $\Var(X_i) = s^2$, the variance of $W$ is given by 	
\begin{align}\label{SRS:sigma}
\sigma^2=s^2\sum_{i=1}^\infty P(M\ge i)=s^2\E M={n Ns^2\over N+m}=O(n).
\end{align}

The following corollary provides an upper bound for the Wasserstein distance between $W/\sigma$ and a standard normal random variable.

\begin{corollary} \label{SRM:Theoretical result} Let $X_1,X_2,\ldots$ be i.i.d. mean zero random variables such that $\Var(X_i) = s^2$ for all $i\geq 1$. Let $W=\sum_{i=1}^MX_i$, where $M$ is  a hypergeometric random variable, independent of $\{X_i\}_{i=1}^\infty$, and let $\sigma^2$ be defined as in  \eqref{SRS:sigma}. Then 
	\begin{align*} d_1\left(\mathcal{L}(W/\sigma), \mathcal{L}(Z) \right)  \le &\left({\E|X_1|^3 \over \left({nN s^2\over N+m}\right)^{3/2}} +{2s^2\E|X_1|\over \left({nNs ^2\over N+m}\right)^{3/2}}\right)\sum_{i=1}^{\min\{n,N\}} P^2(M\ge i)  \\&\quad \quad+{2\sqrt{2} \over \sqrt{\pi}}\cdot\displaystyle{\left(1- \left( \frac{N+m}{nN}\right)\sum_{i=1}^{\min\{n,N\}} P^2(M\ge i)\right)}, \end{align*}  where $Z$ is a standard normal random variable.
\end{corollary}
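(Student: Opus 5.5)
The plan is to apply Corollary \ref{coro2.5} with $N$ replaced by the hypergeometric variable $M$ and with all $\sigma_i^2=s^2$. After that, the two terms of the bound only need to be computed explicitly, using that $M$ is bounded.

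First, I would check that the hypotheses of Corollary \ref{coro2.5} hold. The $X_i$ are i.i.d. with mean zero and variance $s^2$. The variable $M$ takes values in $\{\max\{0,n-m\},\dots,\min\{n,N\}\}\subset\mathbb{N}_0$ and is independent of $\{X_i\}$.

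Since $P(M\ge i)=0$ for $i>\min\{n,N\}$, every infinite sum truncates at $\min\{n,N\}$. By \eqref{SRS:sigma}, $\sigma^2=s^2\E M = nNs^2/(N+m)$. Substituting $\sigma_i^2=s^2$ and $\E|X_i|^3=\E|X_1|^3$, $\E|X_i|=\E|X_1|$ into the first term of Corollary \ref{coro2.5} gives
\[
\frac{1}{\sigma^3}\left(\E|X_1|^3+2s^2\E|X_1|\right)\sum_{i=1}^{\min\{n,N\}}P^2(M\ge i),
\]
where $\sigma^3=\left(nNs^2/(N+m)\right)^{3/2}$. This is exactly the first term of the claimed bound.

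For the second term, I would not use Proposition \ref{prop:upperbound}. Instead I would use the exact identity from its proof:
\[
P(M\ge I)=\sum_{i}\frac{\sigma_i^2}{\sigma^2}P^2(M\ge i).
\]
With $\sigma_i^2=s^2$ and $\sigma^2=s^2\E M$, this becomes $P(M\ge I)=\frac{N+m}{nN}\sum_{i=1}^{\min\{n,N\}}P^2(M\ge i)$. Hence $P(M<I)=1-\frac{N+m}{nN}\sum_{i=1}^{\min\{n,N\}}P^2(M\ge i)$, and multiplying by $2\sqrt{2/\pi}=2\sqrt2/\sqrt\pi$ gives the second term.

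There is essentially no real obstacle here. The one point to verify is that the construction of $I$ in \eqref{Idef} is well defined, that is, $P(I=i)$ sums to one. This holds because $\sum_i s^2P(M\ge i)=s^2\E M=\sigma^2$.

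It should also be noted that the corollary needs $\sigma>0$, i.e. $\E M>0$. This holds because $n,N\ge1$ give $\E M = nN/(N+m)>0$. Otherwise the expressions in the bound are undefined.
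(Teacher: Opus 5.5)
Your proposal is correct and is essentially the paper's proof. The paper also applies Theorem~\ref{uniform} with $\|f_h''\|\le 2$ and $\|f_h'\|\le\sqrt{2/\pi}$ (which is exactly Corollary~\ref{coro2.5}) to $\sigma_i^2=s^2$, $\sigma^2=s^2\E M=nNs^2/(N+m)$, and it evaluates $P(M<I)$ exactly via the identity $P(M\ge I)=\sum_i \frac{\sigma_i^2}{\sigma^2}P^2(M\ge i)$ from the proof of Proposition~\ref{prop:upperbound}, not via its Chebyshev bound.

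One caveat lies upstream and applies equally to the paper: the last equality in the proof of Proposition~\ref{2.1}, $\sum_i P(I=i)\,\E[\,\cdot\mid N\ge i]=\E[\,\cdot\mid N\ge I]$, fails in general because $P(I=i\mid N\ge I)=P(I=i)P(N\ge i)/P(N\ge I)$. For example, with $N$ uniform on $\{1,2\}$, Rademacher $X_i$ and $f(x)=x^3/3$, one gets $\E Wf(W)=3/2$ but $\sigma^2\E[f'(W^*)\mid N\ge I]=7/5$. A direct repair of the argument yields the first term of Theorem~\ref{uniform} with $P(N\ge i)$ in place of $P^2(N\ge i)$, so it is the inherited bound, not your specialization of it, that would need further justification.
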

\proof Following the argument in the proof of Proposition \ref{prop:upperbound} and using \eqref{SRS:sigma}, we have
\begin{align*}
		P(M \ge I) &=  \sum_{i=1}^{\min\{n,N\}} \frac{\Var(X_i)}{\sigma^2} \, P^2(M \ge i)\\
		&=  \sum_{i=1}^{\min\{n,N\}} \frac{s^2}{\sigma^2} \, P^2(M \ge i)\\& =  \left( \frac{N+m}{nN}\right)\sum_{i=1}^{\min\{n,N\}} \, P^2(M \ge i).
	\end{align*} 
	Using Theorem~\ref{uniform} together with the bounds 
	$\|f_h''\| \le 2$ and $\|f_h'\| \le \sqrt{2/\pi}$, 
	we obtain \begin{align*} d_1\left(\mathcal{L}(W/\sigma), \mathcal{L}(Z) \right) & \le \left({\E|X_1|^3 \over \left({nNs^2\over N+m}\right)^{3/2}} +{2s^2\E|X_1|\over \left({nNs^2\over N+m}\right)^{3/2}}\right)\sum_{i=1}^{\min\{n,N\}} P^2(M\ge i)  \\&\quad \quad+{2\sqrt{2} \over \sqrt{\pi}}\cdot\displaystyle{\left(1- \left( \frac{N+m}{nN}\right)\sum_{i=1}^{\min\{n,N\}} P^2(M\ge i)\right)}.\end{align*}

\bbox

\subsubsection{Simulation} 
We now apply Corollary~\ref{SRM:Theoretical result} to an exponential population. More precisely, let $Y_1,Y_2,\ldots$ be i.i.d.\ exponential random
variables with mean $\lambda$, and define the centered variables
\[
X_i = Y_i-\lambda.
\]
For $X_i\sim \mathrm{Exp}(1/\lambda)$, note that $\lambda^{-1} X_i\sim \mathrm{Exp}(1)$. 
Note that the corresponding first three absolute central moments are as follows:
\[
	\E|X_i-\E X_i| = \frac{2\lambda}{ e}, \qquad
    \E|X_i-\E X_i|^2 = {\lambda^2}, \qquad
    \E|X_i-\E X_i|^3 = {\lambda^3}\left(\frac{12}{e}-2\right).
	\]
    Substituting these quantities into Corollary~\ref{SRM:Theoretical result}, and noting that $n \ll N$  in our setting, yields the following bound:
    \begin{align} d_1\left(\mathcal{L}(W/\sigma), \mathcal{L}(Z) \right) \le &\left({\left( \dfrac{12}{e}-2\right) \over \left({nN\over N+m}\right)^{3/2}} +{\left( \dfrac{4}{e}\right)\over \left({nN\over N+m}\right)^{3/2}}\right)\sum_{i=1}^{n} P^2(M\ge i) \nonumber  \\&\quad \quad+{2\sqrt{2} \over \sqrt{\pi}}\cdot\displaystyle{\left(1- \left( \frac{N+m}{nN}\right)\sum_{i=1}^{n} P^2(M\ge i)\right)} \label{SRM:simulation result},\end{align} where $Z$ is a standard normal random variable. The bound is independent of the exponential mean parameter $\lambda$.

Now we perform a simulation to see how our bound in \eqref{SRM:simulation result} behaves compared to the real distance between the normalized $W$ and $Z$. We consider a population with the non-outlier size of \(N = 100{,}000{,}000\), and the outlier size of \(m = 1{,}200{,}000\). The nonoutliers are constructed from an Exponential distribution with mean $5$. The outliers are created from mixed Uniform distributions,
\[
O \sim 
\begin{cases}
	\mathrm{Unif}(-20,-10), & \text{with probability } p_0=\frac{55}{70},\\[0.4em]
	\mathrm{Unif}(50,60), & \text{with probability } 1-p_0.
\end{cases}
\]
Note that $\mathbb{E}[O]=0$.

\begin{comment}
Since the population is large and $X_1,\ldots,X_n$ are a simple random sample, it is safe to say approximately that 
\beas
X_i \sim Exp(1/5) \text{ for } i=1,\ldots,M,
\enas
and 
\beas
X_i \sim O \text{ for } i=M+1,\ldots,n.
\enas

\end{comment}

Table \ref{tab:samp} shows the upper bounds from \eqref{SRM:simulation result} and the experimental distance computed from 100,000 simulated samples.  

\begin{table}[!htbp]
		\centering
	\begin{tabular}{|c|c|c|c|c|c|}
		\hline
		$n$ & Upper bound  & Simulated distance
		& $n$ & Upper bound & Simulated distance   \\
		\hline
		$2^1$   & 2.75061143 & 0.19881889 & $2^{9}$  & 0.17661563 & 0.01431534 \\
		$2^2$   & 1.95054948 & 0.15390773 & $2^{10}$ & 0.12499765 & 0.01214080 \\
		$2^3$   & 1.38449178 & 0.11342520 & $2^{11}$ & 0.08844060 & 0.00853899 \\
		$2^4$   & 0.98356093 & 0.08105740 & $2^{12}$ & 0.06256320 & 0.00800496 \\
		$2^5$   & 0.69896369 & 0.05700365 & $2^{13}$ & 0.04425172 & 0.00759138 \\
		$2^6$   & 0.49638711 & 0.04091913 & $2^{14}$ & 0.03129700 & 0.00484885 \\
		$2^7$   & 0.35206696 & 0.02958365 & $2^{15}$ & 0.02213342 & 0.00400173 \\
		$2^8$   & 0.24944274 & 0.01988771 & $2^{16}$ & 0.01565219 & 0.00877013 \\
		\hline
	\end{tabular}
	\caption{The comparison of theoretical upper bounds and simulated distances for different values of $n$. The simulated distance is computed from 100,000 repeated simulations.}
    \label{tab:samp}
\end{table}
\begin{comment}
\begin{table}[!htbp]
		\centering
	\begin{tabular}{|c|c|c|c|c|c|}
		\hline
		$n$ & Upper bound  & Simulated distance
		& $n$ & Upper bound & Simulated distance   \\
		\hline
		$2^1$   & 3.34189070 & 0.19881889 & $2^{9}$  & 0.17380298 & 0.01431534 \\
		$2^2$   & 2.04134862 & 0.15390773 & $2^{10}$ & 0.12354132 & 0.01214080 \\
		$2^3$   & 1.35746929 & 0.11342520 & $2^{11}$ & 0.08769475 & 0.00853899 \\
		$2^4$   & 0.94444462 & 0.08105740 & $2^{12}$ & 0.06218402 & 0.00800496 \\
		$2^5$   & 0.67035278 & 0.05700365 & $2^{13}$ & 0.04405992 & 0.00759138 \\
		$2^6$   & 0.47888103 & 0.04091913 & $2^{14}$ & 0.03120032 & 0.00484885 \\
		$2^7$   & 0.34218232 & 0.02958365 & $2^{15}$ & 0.02208480 & 0.00400173 \\
		$2^8$   & 0.24410034 & 0.01988771 & $2^{16}$ & 0.01562778 & 0.00877013 \\
		\hline
	\end{tabular}
	\caption{The comparison of theoretical upper bounds and simulated distances for different values of $n$. The simulated distance is computed from 100,000 repeated simulations.}
    \label{tab:samp}
\end{table}
\todoA{fix table}
\end{comment}
\begin{figure}[!htbp]
	\centering
    \includegraphics[width=0.8\textwidth]{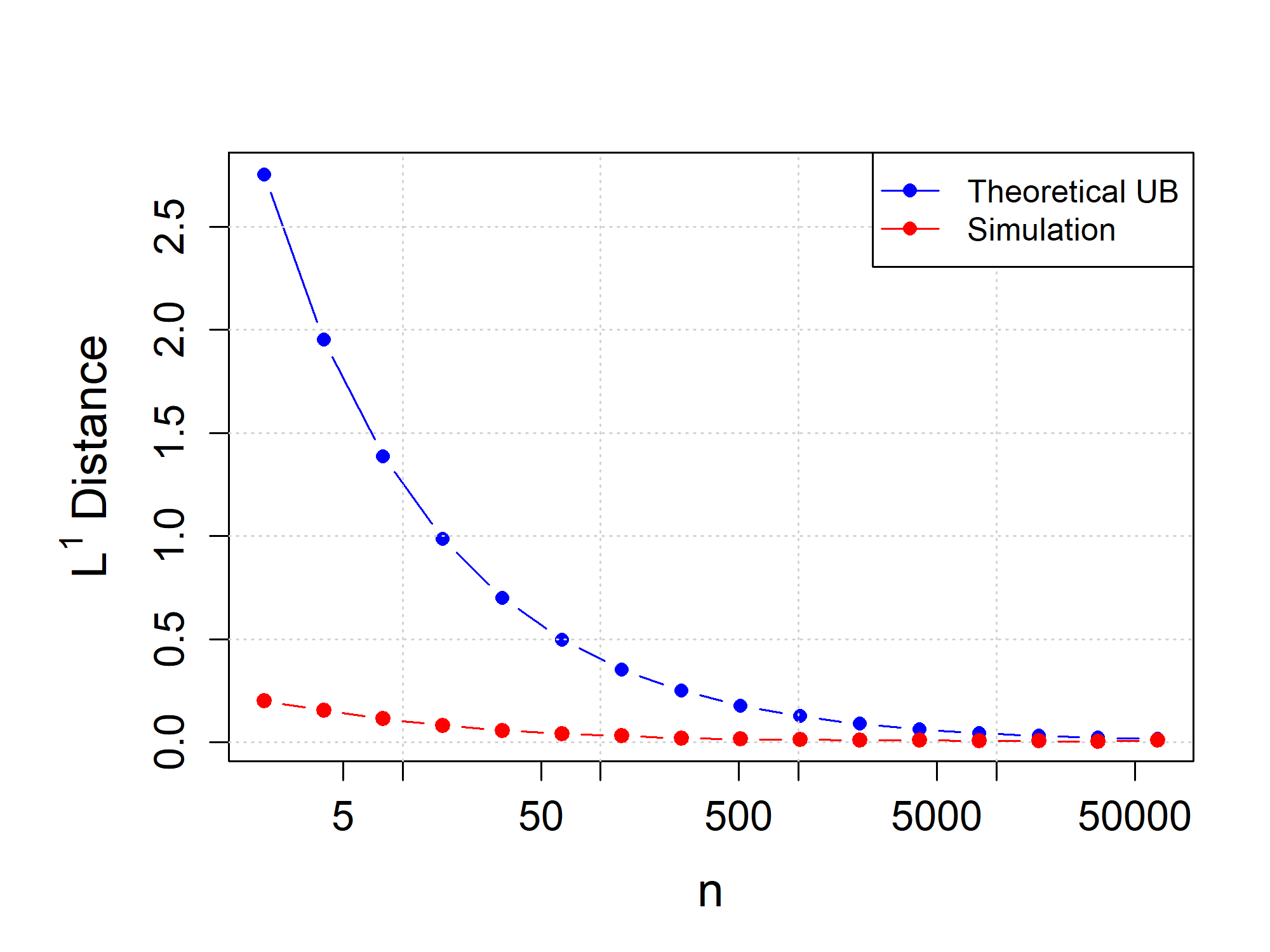}
	\caption{Simulated and theoretical upper bounds as functions of $n$, plotted on a logarithmic scale with a consistent $x$-axis spacing.}
	\label{fig:placeholder}
\end{figure}

Table \ref{tab:samp} and Figure \ref{fig:placeholder} illustrate the comparison between the theoretical upper bounds (blue) and the simulated distances between $W$ and the standard normal distribution for different values of $n$ ranging from $2$ to $2^{16}$. The results show that the simulated distances are always smaller than the theoretical upper bounds for all values of $n$. The convergence rates of the two are aligned, although our theoretical upper bound converges much more slowly due to the large constant.

\newpage
\subsection{Auto Insurance claims}

In this subsection, we approximate the number of auto insurance claims on our generated synthetic dataset by a normal distribution. We generate claim amounts using a Gamma distribution and the total number of claims using a Binomial distribution. The distributional parameters are estimated by the maximum likelihood (MLE) from publicly available data on Kaggle \citep{Xiaomeng}. 
\begin{comment}\todo{Have to mention data in detail. Sample size. Number of claims. Number of policies.}\end{comment}

\begin{comment}
The dataset used in this study is synthetically generated to model the total amount of claims. The number of claims follows a Binomial distribution, while each individual claim is drawn from a Gamma distribution and adjusted by subtracting its mean to ensure a mean of zero. The total amount of the claim is then calculated as the sum of these adjusted claims. The simulation was implemented in R.

We use publicly available data from Kaggle, specifically the ``Car Insurance Claim Data'' dataset by Xiaomeng Sun (https://www.kaggle.com/datasets/xiaomengsun/car-insurance-claim-data). This data set contains information on car insurance claims, including factors such as policyholder details, claim amounts, and other relevant insurance-related factors.
\end{comment}

\subsubsection{Theoretical results}
We consider i.i.d. mean-zero random variables $X_1,X_2,\ldots$ with finite variance. Let $N$ be a binomial random variable independent of all $X_i$ with parameters $n$ and $p$, the number of policyholders and the fraction of policyholders who make at least one claim, respectively. Let
\beas
W=\sum_{i=1}^NX_i.
\enas

 Again, following the argument in the proof of Proposition \ref{prop:upperbound}, we have
 
 $$\sigma^2 := \Var(W) = \sum_{i=1}^\infty \Var(X_i) P(N\ge i)=\sum_{i=1}^\infty\E X^2_i P(N\ge i)=\Var(X_1)\E[N]=\Var(X_1) np,$$
and  \[
P(N\ge I) = \sum_{i=1}^\infty \frac{\Var(X_i)}{\sigma^2}P^2(N \ge i) ={1\over np}\sum_{i=1}^nP^2(N\ge i).
\]

By Remark \ref{Remark:BN}, the bound on the Wasserstein distance between $W$ and the standard normal can be rewritten as follows: \begin{align}
    \label{Remark-insurance}d_1\left(\mathcal{L}(W/{\sigma}), \mathcal{L}(Z) \right)\le \frac{1}{\sigma^3}\sum_{i=1}^{n} \left(\E|X_1|^3 + 2\E X_1^2 \E|X_1|\right) P^2(N \ge i)  +{2\sqrt{2} \over \sqrt{\pi}}\cdot\displaystyle{\left(1-{1\over np}\sum_{i=1}^nP^2(N\ge i)\right)},
    \end{align} where $Z$ is a standard normal random variable.

\begin{comment}
    
\rcolor{The bound of the Wasserstein distance between $W$ and the standard normal is presented in the corollary below, following from Corollary \ref{coro2.5}.} 
\todoA{This is the same as Remark 2.6 (see Equation (8)), so we should remove one of them.}
\begin{corollary}\label{Cor-insurance}  Let $X_1,X_2,\ldots$ be i.i.d. mean zero random variables with finite variance. Let $W=\sum_{i=1}^N X_i$, where $N$ is  a binomial random variable  with parameters $n$ and $p$, independent of $\{X_i\}_{i=1}^\infty$.
	Define $\sigma^2 := \Var(W) = \sum_{i=1}^\infty Var(X_i) P(N\ge i)$. Let $I$ be as in \eqref{Idef}. Then
			$$d_1\left(\mathcal{L}(W/{\sigma}), \mathcal{L}(Z) \right)= \frac{1}{\sigma^3}\sum_{i=1}^{n} \left(\E|X_1|^3 + 2\E X_1^2 \E|X_1|\right) P^2(N \ge i)  +{2\sqrt{2} \over \sqrt{\pi}}\cdot\displaystyle{\left(1-{1\over np}\sum_{i=1}^nP^2(N\ge i)\right)},$$ where $Z$ is a standard normal random variable.
		\end{corollary}
        \begin{proof}
The result follows immediately from
Corollary~\ref{coro2.5}.
\end{proof}
\end{comment}

\subsubsection{Simulation}

The original dataset \citep{Xiaomeng} consists of 10,302 records corresponding to 8,753 unique policyholder IDs. Among these records, 2,746 have positive claim amounts. After aggregating records with the same policyholder ID, the final dataset contains 8,753 policyholders, of whom 2,583 have a positive total claim amount and 6,170 have a total claim amount of zero. The Gamma parameters are estimated by MLE using all positive claims, yielding $k=0.8203$ and $\theta=10962.0161$. The fraction of policyholders who make at least one claim is $\frac{2583}{8753}$, which gives the MLE of $p$.

Specifically, we consider independent gamma random variables $X'_1,X'_2,\ldots$ with shape parameter $k=0.8203$ %$k=0.820285415321782 $
and scale parameter $\theta=10962.0161$. Let $N$ be a binomial random variable independent of all $X'_i$ with parameters $n$ and $p=\frac{2583}{8753}$, where $p$ represents the fraction of policyholders who make at least one claim Let
\beas
W=\sum_{i=1}^NX_i,
\enas 
where $X_i=X'_i-\E[X'_i]$ for all $i$. Then $$\sigma^2 := \Var(W) = \sum_{i=1}^\infty Var(X_i) P(N\ge i)=\sum_{i=1}^\infty Var( X'^2_i )P(N\ge i)=k\theta^2\E[N]=k\theta^2 np,$$
and  \[
P(N\ge I) = \sum_{i=1}^\infty \frac{Var(X_i)}{\sigma^2}P^2(N \ge i) = \sum_{i=1}^\infty \frac{Var(X'_i)}{\sigma^2}P^2(N \ge i)={1\over np}\sum_{i=1}^nP^2(N\ge i).
\]
For the specified distribution of $X_i$, we substitute these quantities into \eqref{Remark-insurance} to obtain the theoretical upper bounds shown in Table \ref{tab:insure}. We compute the first three absolute central moments numerically, since their analytical expressions are rather complicated. We also perform a simulation study to compare these theoretical bounds with the simulated distance between the normalized $W$ and $Z$. We repeat the simulation 1,000,000 times and compute the average distance from the standard normal distribution, as described in the theoretical part above.

\begin{comment}
Substituting these quantities into \eqref{Remark-insurance} yields the following  bound:
			\begin{align} \label{eqn:single population} d_1\left(\mathcal{L}(W/{\sigma}), \mathcal{L}(Z) \right)= \frac{1}{\sigma^3}\sum_{i=1}^{n} \left(\E|X_1|^3 + 2\E X_1^2 \E|X_1|\right) P^2(N \ge i)  +{2\sqrt{2} \over \sqrt{\pi}}\cdot\displaystyle{\left(1-{1\over np}\sum_{i=1}^nP^2(N\ge i)\right)}, \end{align} where $Z$ is a standard normal random variable.
	\todoA{This part needs to be revised.}
			
       Now we perform a simulation to see how our bound in \eqref{eqn:single population} behaves compared to the real distance between the normalized $W$ and $Z$. 
\end{comment}
\newpage
\begin{table}[htbp]
			\centering
			\begin{tabular}{|c|c|c|c|c|c|}
				\hline
				$n$ &  Upper bounds & Simulated distance & $n$ &  Upper bounds & Simulated distance\\
				\hline
				$2^{1}$  & 3.182252156 & 0.397469296  & $2^{11}$ & 0.189724807 &  0.014580690\\
				$2^{2}$  & 2.805325953 & 0.270339191  & $2^{12}$ & 0.134803390 & 0.010225892\\
				$2^{3}$  & 2.292840787 & 0.216659633 & $2^{13}$ & 0.095644132 & 0.007348849 \\
				$2^{4}$  & 1.782629262 & 0.160307504 & $2^{14}$ & 0.067792444 & 0.005204517 \\
				$2^{5}$  & 1.342652002 & 0.115341608 & $2^{15}$ & 0.048017396 & 0.003791629 \\
				$2^{6}$  & 0.990774915 & 0.081930908 & $2^{16}$ & 0.033993871 & 0.002766599 \\
				$2^{7}$  & 0.721327391 & 0.058025642 & $2^{17}$ & 0.024057517 & 0.002042665  \\
				$2^{8}$  & 0.520433949 & 0.040943521 & $2^{18}$ & 0.017021343 & 0.001708912 \\
				$2^{9}$  & 0.373190694 & 0.029063853 & $2^{19}$ & 0.012040961 & 0.001505429 \\
				$2^{10}$ & 0.266478699 & 0.020438395 & $2^{20}$ & 0.008516772 & 0.001407022 \\
				\hline
			\end{tabular}
            
			\caption{The comparison of theoretical upper bounds and simulated distances for different values of $n$. The simulated distance is computed from 1,000,000 repeated simulations.}
        \label{tab:insure}
		\end{table}
Figure \ref{fig:insure} compares the simulated distances with the theoretical upper bounds. The theoretical upper bounds (blue) decrease as $n$ increases, while the simulated values (red) are consistently smaller than the theoretical bounds. The gap between the two curves also becomes smaller, indicating that the approximation improves as $n$ increases. This improvement is due to the diminishing effect of the large constant in the theoretical bound.
		\begin{figure}[!htbp]  % 'h' means here, but you can use 't' for top, 'b' for bottom, etc.
			\centering
            \includegraphics[width=0.8\textwidth]{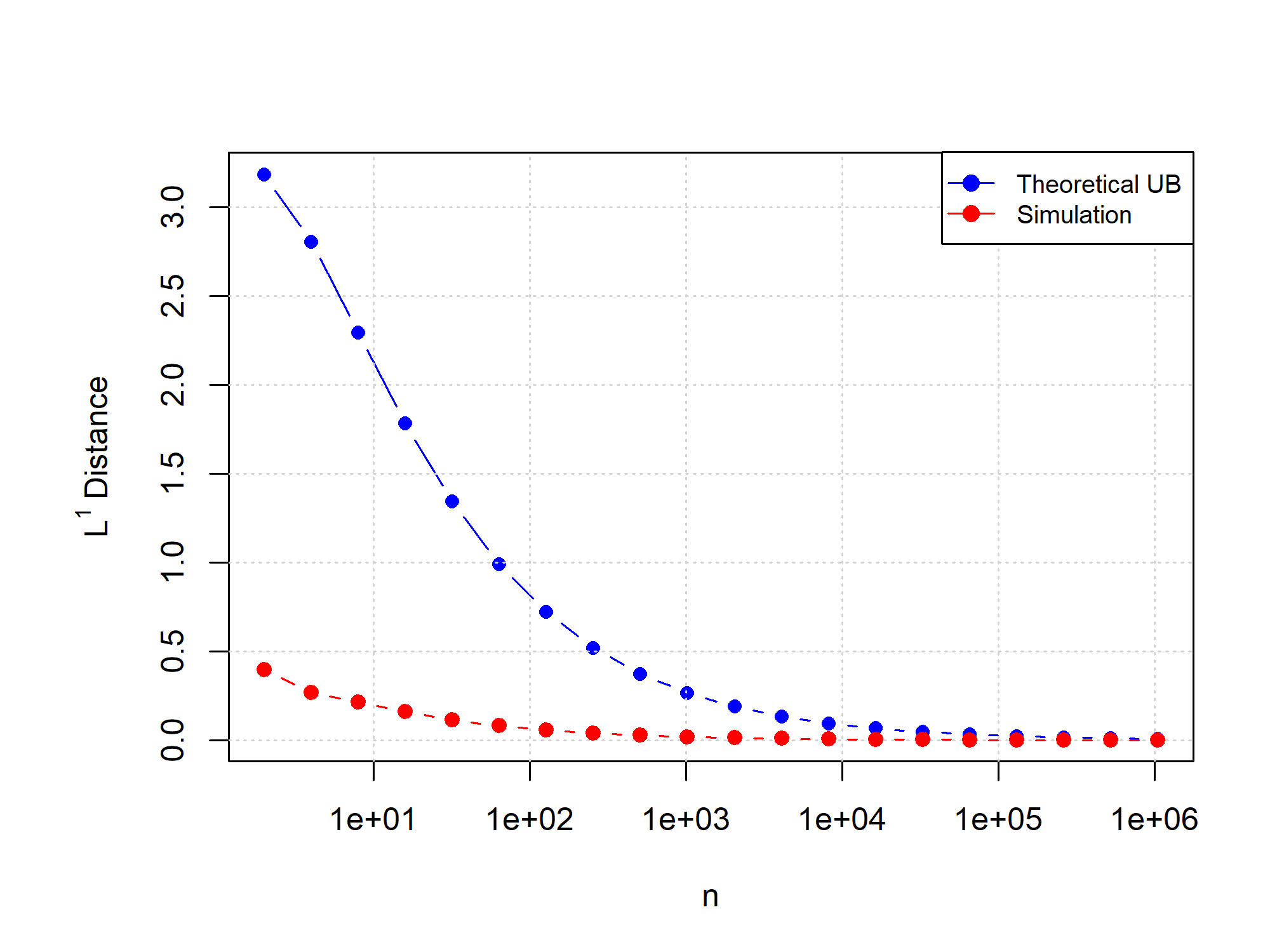}
            \caption{Simulated and theoretical upper bounds as functions of $n$, plotted on a logarithmic scale with a consistent $x$-axis spacing.}
            \label{fig:insure}
		\end{figure}

		%%%%%%%%%%%%%%%
\newpage
		\subsection{Extension of auto insurance claims} \label{app}

In this subsection, we extend the analysis from the previous section to approximate insurance claims by a normal distribution under the additional assumption that ``an insurance company becomes stricter in approving claim amounts after the total claim amount reaches a certain level.'' This assumption is motivated by the hypothesis that auto service providers may submit slightly inflated bills to insurance companies. When the total claim amount reaches a certain level, the insurer may review the claim more carefully and negotiate a lower payment.
		
		\subsubsection{Theoretical results}

Let $n_1\in\mathbb{N}$. Consider two independent groups of mean zero random variables,
$X_1,\ldots,X_{n_1}$
and
$X_{n_1+1},X_{n_1+2},\ldots$,
each having finite variance. The random variables within each group are i.i.d.,
while the distributions of the two groups are allowed to differ.  Let $N$ be a binomial random variable with parameters
$n$ and $p$, independent of all the $X_i$, where $n$ is the number of policyholders, and $p$ represents the
fraction of policyholders who make at least one claim. Define 
		\[
		W = \sum_{i=1}^N X_i.
		\]

Similar to the previous case, we have
\begin{align}\label{TH:IC1}
			\sigma^2 := \Var(W) &= \sum_{i=1}^\infty \Var(X_i) \, P(N \ge i)\nn \\
			&=  \sum_{i=1}^{n_{1}  } \Var(X_1)  \, P(N \ge i)+ \sum_{i={n_{1}+1}  }^n \Var(X_{n_{1} + 1})  \, P(N \ge i),
		\end{align}
		and
		\begin{align}\label{TH:IC2}
			{P}(N \ge I) &= \sum_{i=1}^\infty \frac{\Var(X_i)}{\sigma^2}P^2(N \ge i) \nn\\
			&= \sum_{i=1}^{n_1}\frac{\Var(X_1)}{\sigma^2} \, {P}^2(N \ge i) +\sum_{i=n_1+1}^{n}\frac{\Var(X_{n_1+1})}{\sigma^2} \, {P}^2(N \ge i).
		\end{align}
		
		%\[
		%{P}(N \ge I) = \sum_{i=1}^\infty \frac{\sigma_i^2}{\sigma^2} \, {P}^2(N \ge i) = \frac{1}{np} \sum_{i=1}^n {P}^2(N \ge i).
		%\]
	\begin{remark}
If $n_1\ge n$, the setting reduces to the single population case.
	\end{remark}	
		
		The following corollary provides an upper bound for the Wasserstein distance between 
		$W/\sigma$ and a standard normal random variable. 
		
		\begin{corollary}\label{cor:insurance} Let $n_1\in \mathbb{N}.$ Let $X_1,X_2,\ldots$ be mean zero random variables with finite variances.
Suppose that
$X_1,\ldots,X_{n_1} \text{ and }
X_{n_1+1}, X_{n_1+1},\ldots$
form two independent groups of i.i.d.\ random variables, where the distributions may differ
between the two groups. Let $W=\sum_{i=1}^N X_i$, where $N$ is  a binomial random variable  with parameters $n$ and $p$, independent of $\{X_i\}_{i=1}^\infty$, and let $\sigma^2$ be defined as in \eqref{TH:IC1}.  %Let $I$ be as in \eqref{Idef}.
   Then
			\begin{align*}
				d_1\left(\mathcal{L}(W/\sigma), \mathcal{L}(Z) \right)
				&\le \frac{1}{\sigma^3}\sum_{i=1}^{n_{1}} \left(\E|X_1|^3 + 2\E X_1^2 \E|X_1|\right) P^2(N \ge i) \\
				&\quad + \frac{1}{\sigma^3} \sum_{i=n_{1}+1}^{n} \left(\E|X_{n_{1}+1}|^3 + 2\E X_{n_{1}+1}^2  \E|X_{n_{1}+1}|\right) P^2(N \ge i) \\
				&\quad + \frac{2\sqrt{2}}{\sqrt{\pi}} \left(1 - \sum_{i=1}^{n_1}\frac{\Var(X_1)}{\sigma^2} \, {P}^2(N \ge i) -\sum_{i=n_1+1}^{n}\frac{\Var(X_{n_1+1})}{\sigma^2} \, {P}^2(N \ge i)\right),
			\end{align*}
			where $Z$ is a standard normal random variable.
		\end{corollary}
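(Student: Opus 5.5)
This is a direct specialization of Corollary~\ref{coro2.5}, in the same way Corollary~\ref{SRM:Theoretical result} specializes Theorem~\ref{uniform}. The plan is to invoke Corollary~\ref{coro2.5} for the independent sequence $X_1,X_2,\ldots$ and the binomial $N$. The hypotheses hold: within each group the summands are i.i.d., the two groups are independent of each other, all summands have mean zero and finite variance, and $N$ is independent of $\{X_i\}$. Independence of all the $X_i$ is the only structural assumption that Corollary~\ref{coro2.5} needs, since it does not require identical distributions.

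Next, split the infinite sums using the support of $N\sim Bin(n,p)$. Because $P(N\ge i)=0$ for $i>n$, every sum over $i$ truncates at $n$. For $i\le n_1$, replace $\E|X_i|^3$, $\sigma_i^2$ and $\E|X_i|$ by the first-group values $\E|X_1|^3$, $\E X_1^2$ and $\E|X_1|$; here $\sigma_i^2=\E X_i^2$ because the mean is zero. For $n_1<i\le n$, use the second-group values indexed by $X_{n_1+1}$. This turns the first term of Corollary~\ref{coro2.5} into the two displayed sums, and the variance $\sigma^2$ into \eqref{TH:IC1}.

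For the remainder term $2\sqrt{2/\pi}\,P(N<I)$, write $P(N<I)=1-P(N\ge I)$. Then compute $P(N\ge I)$ exactly as in the proof of Proposition~\ref{prop:upperbound}, giving $P(N\ge I)=\sum_i \sigma_i^2P^2(N\ge i)/\sigma^2$. Splitting this sum at $n_1$ yields \eqref{TH:IC2}, and substituting gives the last line of the bound.

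No step is a genuine obstacle. The only points needing care are the index bookkeeping when $n_1\ge n$, where the second sum is empty and we recover the single-population case, and confirming that $\sigma^2>0$ so that $I$ is well defined. The latter holds as soon as $p>0$ and the first-group variance is positive.
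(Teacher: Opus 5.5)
Your proposal is correct given the paper's earlier results, and it is essentially the paper's own proof: the paper applies Theorem~\ref{uniform} with $\|f_h''\|\le 2$ and $\|f_h'\|\le\sqrt{2/\pi}$ (that is, Corollary~\ref{coro2.5}) and splits $\sigma^2$ and $P(N\ge I)$ at $n_1$ as in \eqref{TH:IC1} and \eqref{TH:IC2}. Be aware, though, that both arguments inherit a flaw in the last step of the proof of Proposition~\ref{2.1}, which identifies $\sum_i P(I=i)\,\E[f'(W-X_i+X_i^*)\mid N\ge i]$ with $\E[f'(W^*)\mid N\ge I]$ even though $P(I=i\mid N\ge I)=P(I=i)P(N\ge i)/P(N\ge I)$ for $I$ independent of $N$ (with $N$ uniform on $\{1,2\}$, Rademacher $X_i$ and $f(x)=x^3/3$ one gets $\E Wf(W)=3/2$ but $\sigma^2\E[f'(W^*)\mid N\ge I]=7/5$); rerunning the Stein argument from the correct identity $\E Wf(W)=\sum_i\sigma_i^2\,\E[f'(W-X_i+X_i^*)\mathbf{1}(N\ge i)]$ gives this bound only with $P(N\ge i)$ in place of $P^2(N\ge i)$ in the first two sums, while the $P(N<I)$ term is unaffected.
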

		\proof The proof follows from Theorem~\ref{uniform}, together with the bounds
			$\|f_h''\| \le 2$ and $\|f_h'\| \le \sqrt{2/\pi}$, and equations \eqref{TH:IC1} and \eqref{TH:IC2}.
           \bbox

		\subsubsection{Simulation}   
        
		First we let \( N \sim \mathrm{Bin}(n, p) \), where \( p = \dfrac{2583}{8753} \) as in the previous section. Consider a sequence of independent claim amounts \( X'_1, X'_2, \ldots \), where the first $n_1$ variables, \( X'_1, \ldots, X'_{n_1} \), follow a Gamma distribution with shape parameter \( k_1 = 4.04 \) and scale parameter \( \theta_1 = 1025.78 \), and the remaining variables \( X'_{n_1+1}, X'_{n_1+2}, \ldots \) follow a Gamma distribution with parameters \( k_2 = 2.5 \) and \( \theta_2 = 800 \). These parameters are chosen for illustration so that the mean claim amount in the first group is larger than that in the second group based on our hypothesis. Each claim is centered by subtracting its expected value, that is, \( X_i = X'_i - \mathbb{E}[X'_i] \). Finally, define the total (centered) claim amount as
		\[
		W = \sum_{i=1}^N X_i.
		\]
		Then, the variance of \( W \) is
		\begin{align*}
			\sigma^2 := \Var(W) 
			&=  \sum_{i=1}^{n_{1}  } k_1\theta^2_1  \, P(N \ge i)+ \sum_{i={n_{1}+1}  }^n k_2\theta^2_2  \, P(N \ge i) .
		\end{align*}
		
		Moreover,
		
		\begin{align*}
			{P}(N \ge I) 
			&= \sum_{i=1}^{n_1}\frac{k_1\theta^2_1}{\sigma^2} \, {P}^2(N \ge i) +\sum_{i=n_1+1}^{n}\frac{k_2\theta^2_2}{\sigma^2} \, {P}^2(N \ge i).
		\end{align*}
Substituting these quantities into Corollary~\ref{cor:insurance} yields the following  bound:
			\begin{align}
				d_1\left(\mathcal{L}(W/\sigma), \mathcal{L}(Z) \right)
				&\le \frac{1}{\sigma^3}\sum_{i=1}^{n_{1}} \left(\E|X_1|^3 + 2\E X_1^2 \E|X_1|\right) P^2(N \ge i) \nonumber\\
				&\quad + \frac{1}{\sigma^3} \sum_{i=n_{1}+1}^{n} \left(\E|X_{n_{1}+1}|^3 + 2\E X_{n_{1}+1}^2  \E|X_{n_{1}+1}|\right) P^2(N \ge i)\nonumber \\
				&\quad + \frac{2\sqrt{2}}{\sqrt{\pi}} \left(1 - \sum_{i=1}^{n_1}\frac{k_1\theta^2_1}{\sigma^2} \, {P}^2(N \ge i) -\sum_{i=n_1+1}^{n}\frac{k_2\theta^2_2}{\sigma^2} \, {P}^2(N \ge i)\right). \label{eqn:two population}
			\end{align}
            
We now simulate the behavior of our bound in \eqref{eqn:two population} and compare it with the simulated distance between the normalized $W$ and $Z$, as shown in Table \ref{tab:insure2}. In \eqref{eqn:two population}, we again compute the first three absolute central moments numerically, since their analytical expressions are rather complicated.

			\begin{table}[!htbp]
			\centering
			\begin{tabular}{|c|c|c|c|c|c|}
				\hline
				$n$ & Upper bounds & Simulated distance & $n$ & Upper bounds & Simulated distance \\
				\hline
				$2^1$  & 2.837730952 & 0.336570427 & $2^{11}$ & 0.173259958& 0.008203795 \\
				$2^2$  & 2.483412912 & 0.199716890 & $2^{12}$ & 0.122745093 & 0.005945545 \\
				$2^3$  & 2.020426998 & 0.110711251 & $2^{13}$ & 0.086570289 & 0.004286396  \\
				$2^4$  & 1.566328648 & 0.077239348 & $2^{14}$ & 0.061084707 & 0.003213821 \\
				$2^5$  & 1.177558296 & 0.053436330 & $2^{15}$ & 0.043149835 & 0.002353975 \\
				$2^6$  & 0.867886841 & 0.037369233 & $2^{16}$ & 0.030503179 & 0.001809013 \\
				$2^7$  & 0.631338871 & 0.026270865 & $2^{17}$ & 0.021570680 & 0.001647794 \\
				$2^8$  & 0.455242470 & 0.018602045 & $2^{18}$ & 0.015255875 & 0.001461229 \\
				$2^9$  & 0.317432568 & 0.014071492 & $2^{19}$ & 0.010789914 & 0.001391717 \\
				$2^{10}$ & 0.239888497 & 0.011096590 & $2^{20}$ & 0.007631111 & 0.001401313 \\
				\hline
			\end{tabular}
			\caption{The comparison of theoretical upper bounds and simulated distances for different values of $n$. The simulated distance is computed from 1,000,000 repeated simulations.}
    \label{tab:insure2}
		\end{table}

		\newpage
	For visualization, we plot the simulated distances compared with the theoretical upper bounds in Figure \ref{fig:insure2}. The results behave similarly to the previous case.

		\begin{figure}[H]  
			\centering
            \includegraphics[width=0.8\textwidth]{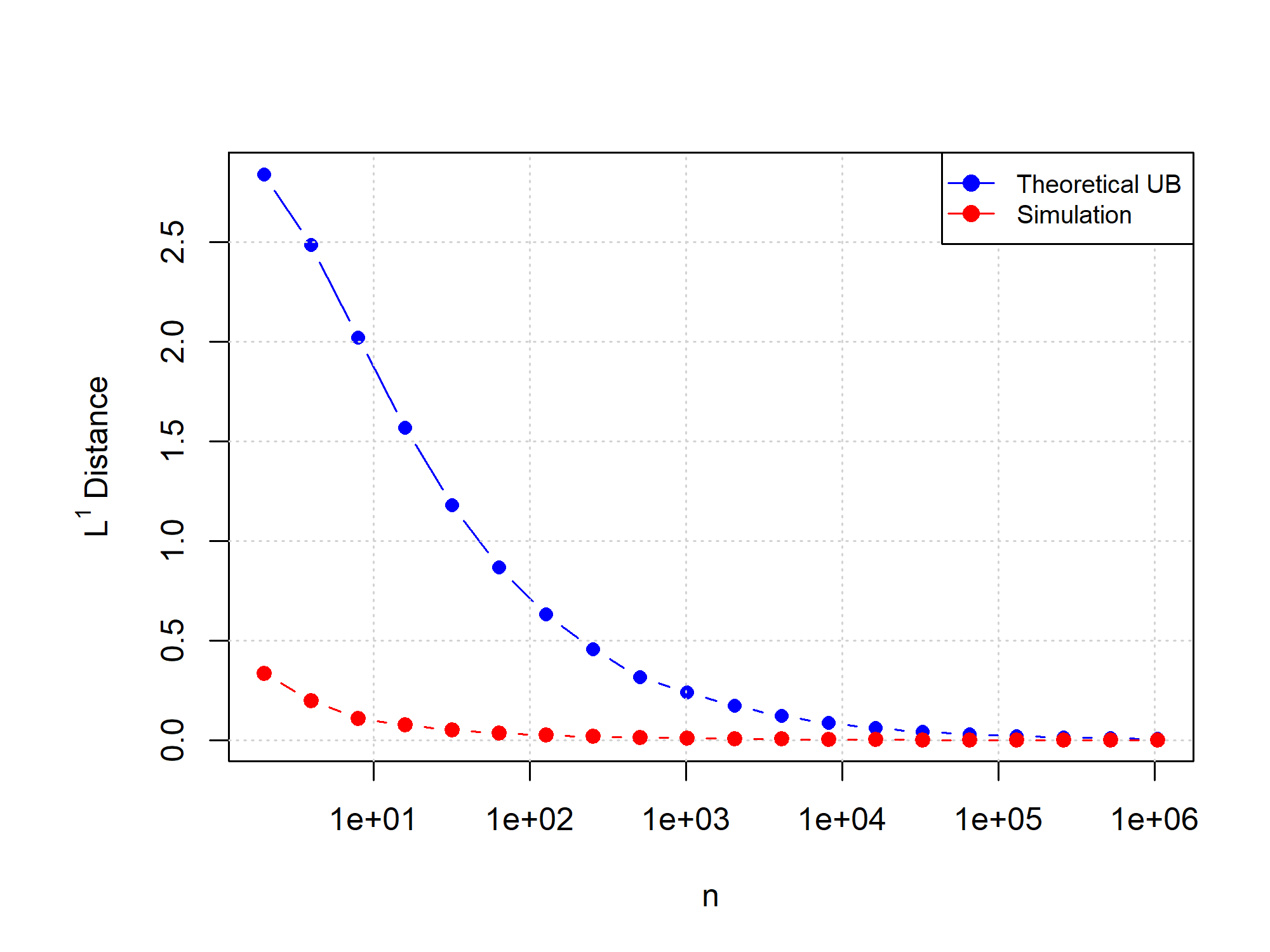}
            \caption{Simulated and theoretical upper bounds as functions of $n$, plotted on a logarithmic scale with a consistent $x$-axis spacing.}
        \label{fig:insure2}
		\end{figure}

		%%%%%%%%%%%%%%%%%%%%%%%%%%

		%%%%%%%%%%%%%%%%%%%%%%%%%%%%%%%%%%%%%%%%%%%%%
		
		\subsection{Generative AI} 

       In this subsection, we model the response time of a generative AI system under the hypothesis that the response time for the same question increases as user traffic increases. We use ChatGPT to support this hypothesis and to help define four appropriate levels of delay.
		\subsubsection{Theoretical results}
         Let $n_1, n_2, n_3\in \mathbb{N}$ be such that $n_1< n_2< n_3$. Let $X_1,X_2,\ldots$ be mean zero random variables with finite variances.
Suppose that
$X_1,\ldots,X_{n_1}$, $X_{n_1+1},\ldots X_{n_2}$, $X_{n_2+1},\ldots X_{n_3}$    and  $X_{n_3+1}, X_{n_3+2}, \ldots $ 
form four independent groups of i.i.d.\ random variables, where the distributions may differ among the four groups. Let $W=\sum_{i=1}^N X_i$, where $N$ is a Poisson random variable  with parameter $\lambda$, independent of $\{X_i\}_{i=1}^\infty$.

We again follow the proof of Proposition \ref{prop:upperbound} to obtain the following:
		\begin{align} \label{TH:AI1}
			\sigma^2 := \Var(W) &= \sum_{i=1}^\infty \Var(X_i) \, P(N \ge i) \nonumber\\
			&=  \sum_{i=1}^{n_{1}  } \Var(X_1)  \, P(N \ge i)+ \sum_{i={n_{1}+1}  }^{n_2}\Var(X_{n_{1} + 1})  \, P(N \ge i)\nonumber \\
			&\quad+  \sum_{i={n_2+1}}^{n_{3}  }\Var(X_{n_{2} + 1}) \, P(N \ge i)+  \sum_{i=n_{3}+1}^{\infty  }\Var(X_{n_{3} + 1})  \, P(N \ge i). 
		\end{align}
		Moreover,
		\begin{align} \label{TH:AI2}
				P(N \ge I) 
				&= \sum_{i=1}^{n_1}\frac{\Var(X_1)}{\sigma^2} \, {P}^2(N \ge i) +\sum_{i=n_1+1}^{n_2}\frac{\Var(X_{n_1+1})}{\sigma^2} \, {P}^2(N \ge i)\nonumber\\
                &+\sum_{i=n_2+1}^{n_3}\frac{\Var(X_{n_2+1})}{\sigma^2} \, {P}^2(N \ge i)+\sum_{i=n_3+1}^{\infty}\frac{\Var(X_{n_3+1})}{\sigma^2} \, {P}^2(N \ge i),
			\end{align} 
		
	The following corollary establishes an upper bound on the Wasserstein distance between $W/\sigma$ and the standard normal distribution.
		\begin{corollary} \label{Cor: ChatGPT} 
 Let $n_1, n_2, n_3\in \mathbb{N}$ be such that $n_1< n_2< n_3$. Let $X_1,X_2,\ldots$ be mean zero random variables with finite variances.
Suppose that
$X_1,\ldots,X_{n_1}$, $X_{n_1+1},\ldots X_{n_2}$, $X_{n_2+1},\ldots X_{n_3}$    and  $X_{n_3+1}, X_{n_3+2}, \ldots $ 
form four independent groups of i.i.d.\ random variables, where the distributions may differ among the four groups. Let $W=\sum_{i=1}^N X_i$, where $N$ is a Poisson random variable  with parameter $\lambda$, independent of $\{X_i\}_{i=1}^\infty$, and let
 $\sigma^2$ be defined as in \eqref{TH:AI1}. Then
			\begin{align}\label{Cor:Chatgt}
				d_1\!\left(\mathcal{L}(W/\sigma), \mathcal{L}(Z) \right)
				&\le \frac{1}{\sigma^3} \sum_{i=1}^{n_{1}} 
				\left(\mathbb{E}|X_1|^3 + 2\,\mathbb{E} X_1^2 \,\mathbb{E}|X_1|\right) P^2(N \ge i) \nn\\[4pt]
				&\quad + \frac{1}{\sigma^3} \sum_{i=n_{1}+1}^{n_2} 
				\left(\mathbb{E}|X_{n_{1}+1}|^3 + 2\,\mathbb{E} X_{n_{1}+1}^2  \,\mathbb{E}|X_{n_{1}+1}|\right) P^2(N \ge i)\nn \\[4pt]
				&\quad + \frac{1}{\sigma^3} \sum_{i=n_{2}+1}^{n_3} 
				\left(\mathbb{E}|X_{n_{2}+1}|^3 + 2\,\mathbb{E} X_{n_{2}+1}^2  \,\mathbb{E}|X_{n_{2}+1}|\right) P^2(N \ge i)\nn \\[4pt]
				&\quad + \frac{1}{\sigma^3} \sum_{i=n_{3}+1}^{\infty} 
				\left(\mathbb{E}|X_{n_{3}+1}|^3 + 2\,\mathbb{E} X_{n_{3}+1}^2  \,\mathbb{E}|X_{n_{3}+1}|\right) P^2(N \ge i) \nn\\[4pt]
				&\quad + \frac{2\sqrt{2}}{\sqrt{\pi}} 
				\Bigg(1 -\sum_{i=1}^{n_1}\frac{1}{\lambda^2_1\sigma^2} \, {P}^2(N \ge i) -\sum_{i=n_1+1}^{n_2}\frac{1}{\lambda^2_2\sigma^2} \, {P}^2(N \ge i)\nn\\&\quad\quad\quad\quad  -\sum_{i=n_2+1}^{n_3}\frac{1}{\lambda^2_3\sigma^2} \, {P}^2(N \ge i)-\sum_{i=n_3+1}^{\infty}\frac{1}{\lambda^2_4\sigma^2} \, {P}^2(N \ge i)\Bigg).
			\end{align}
		\end{corollary}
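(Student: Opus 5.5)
The plan is to obtain the corollary as a direct specialization of Corollary \ref{coro2.5} (equivalently, Theorem \ref{uniform} together with $\|f_h''\|\le 2$ and $\|f_h'\|\le\sqrt{2/\pi}$), in the same way as Corollary \ref{cor:insurance}. The only changes are that the index set now splits into four blocks instead of two, and that $N$ is Poisson, so the last block is an infinite sum.

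First I check the hypotheses. The $X_i$ are independent and mean zero with finite variances, and $N\sim\mathrm{Poisson}(\lambda)$ is supported in $\mathbb{N}_0$ and independent of $\{X_i\}$. We may assume the third absolute moments of the four group distributions are finite, since otherwise the right-hand side of \eqref{Cor:Chatgt} is infinite and there is nothing to prove. Corollary \ref{coro2.5} then gives
\begin{align*}
d_1\left(\mathcal{L}(W/\sigma),\mathcal{L}(Z)\right)\le \frac{1}{\sigma^3}\sum_{i=1}^\infty\left(\E|X_i|^3+2\sigma_i^2\E|X_i|\right)P^2(N\ge i)+2\sqrt{\frac{2}{\pi}}\,P(N<I).
\end{align*}
In the first term I write $\sigma_i^2=\E X_i^2$, which holds because $\E X_i=0$. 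I then split the series at $n_1,n_2,n_3$. Within each block the $X_i$ are identically distributed, so $\E|X_i|^3$, $\E X_i^2$ and $\E|X_i|$ can be replaced by those of the first index of the block, $X_1$, $X_{n_1+1}$, $X_{n_2+1}$ or $X_{n_3+1}$. This yields exactly the four sums in \eqref{Cor:Chatgt}. Splitting the last, infinite block is legitimate because all terms are nonnegative. That block is also finite, since $P^2(N\ge i)\le P(N\ge i)$ and $\sum_i P(N\ge i)=\lambda<\infty$.

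For the second term I write $P(N<I)=1-P(N\ge I)$ and use the identity $P(N\ge I)=\sum_i \sigma_i^2 P^2(N\ge i)/\sigma^2$ from the proof of Proposition \ref{prop:upperbound}. Splitting it into the same four blocks gives \eqref{TH:AI2}, and in the same way $\sigma^2$ is given by \eqref{TH:AI1}. It remains to identify $\Var(X_{n_{j-1}+1})$ with $1/\lambda_j^2$, where $n_0=0$ and $j=1,\dots,4$. I read the $\lambda_j$ in the statement as the group rates, so that the $j$-th group is a centered exponential with rate $\lambda_j$ and hence has variance $1/\lambda_j^2$; this is the modelling choice the last term presupposes. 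Substituting these variances into \eqref{TH:AI2} and multiplying by $2\sqrt{2/\pi}=2\sqrt2/\sqrt\pi$ gives the final term of \eqref{Cor:Chatgt}.

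The only genuine obstacle is bookkeeping rather than analysis. The $\lambda_j$ must be correctly identified with the group variances, since the statement does not define them explicitly and they are distinct from the Poisson mean $\lambda$. One must also make sure the infinite Poisson tail block converges so that the split series are well defined, and this is settled by the bound $\sum_i P(N\ge i)=\lambda$.
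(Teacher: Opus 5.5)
This is correct and is the paper's own argument: Corollary~\ref{coro2.5} (that is, Theorem~\ref{uniform} with $\|f_h''\|\le 2$ and $\|f_h'\|\le\sqrt{2/\pi}$), followed by splitting $\sigma^2$, the moment sum and $P(N\ge I)=\sum_i\sigma_i^2P^2(N\ge i)/\sigma^2$ into the four blocks as in \eqref{TH:AI1}--\eqref{TH:AI2}. You do not need to assume the groups are exponential, since the statement never defines $\lambda_j$: set $\lambda_j^{-2}:=\Var(X_{n_{j-1}+1})$ (with $n_0=0$), which is what the paper's appeal to \eqref{TH:AI2} tacitly does, and the bound then holds for arbitrary group distributions as stated.
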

		\proof The proof follows from Theorem~\ref{uniform}, together with the bounds
			$\|f_h''\| \le 2$ and $\|f_h'\| \le \sqrt{2/\pi}$, and equations \eqref{TH:AI1} and \eqref{TH:AI2}. 
	\bbox

        \begin{comment}
		\begin{remark}
			\rcolor{We remark that our bound in Corollary \ref{Cor: ChatGPT} has three jumps. When $\lambda$ is close to the threshold $n_1, n_2$ or $n_3$, $N$ is more likely to exceed the threshold, which results in more effects from the second to the fourth terms in \eqref{Cor:Chatgt}, and $1/\lambda_j$ is much larger than $1/\lambda_{j-1}$.}
		\end{remark}
	\todoA{Consider relocating this remark. It would also be helpful to state that $\lambda_j$ is decreasing in $j$.}	
		\end{comment}

		\subsubsection{Simulation} 
        
Assume that many users submit moderate-complexity queries at the same time. 
With the assistance of ChatGPT (OpenAI, January~15,~2026), we classify 
the system behavior into four states: no delay, noticeable but acceptable delay, 
severe delay (queue explosion), and throttling/rejection. We use these four states to model the system response time in both the theoretical analysis and the simulation.

Let \( N \sim \mathrm{Poisson}(\lambda) \) be independent of the sequence \(\{X'_i\}_{i\geq 1}\).  
		Consider a sequence of waiting times \( X'_1, X'_2, \ldots \),
        where the waiting times are divided into four groups according to indices  
		\[
		0 = n_0< n_1 < n_2< n_3.
		\]
		For each group \( j \in \{1, 2, 3\} \), the waiting times  
		\[
		X'_{n_{j-1}+1}, X'_{n_{j-1}+2}, \ldots, X'_{n_j}
		\]
		are independent and follow the Exponential distribution with rate parameter \( r_j \).   For the last group \( j = 4 \), the range extends to infinity.
	
		For each group $j=1,2,3,4$, define
		\[
		X_i = X'_i - \frac{1}{r_j}.
		\]
		Finally, define the total centered waiting time as
		\[
		W = \sum_{i=1}^{N} X_i.
		\]
For $X\sim \mathrm{Exp}(r)$, note that $r X\sim \mathrm{Exp}(1)$.  We then apply the preceding result by
substituting the corresponding first three absolute central moments:
			\[
			  \E|X-\E X| = \frac{2}{r e},
            \qquad
			\E|X-\E X|^2 = \frac{1}{r^2}, \qquad
            \E|X-\E X|^3 = \frac{1}{r^3}\left(\frac{12}{e}-2\right).
			\]
Substituting these quantities into Corollary~\ref{Cor: ChatGPT} yields the following  bound: \begin{align}\label{sim:AI1}
				d_1\!\left(\mathcal{L}(W/\sigma), \mathcal{L}(Z) \right)
				&\le \frac{1}{\sigma^3} \sum_{i=1}^{n_{1}} 
				\left(\frac{1}{r_1^3}\left(\frac{12}{e}-2\right) + \frac{4}{er_1^3 }\right) P^2(N \ge i) \nn \\[4pt] 
				&\quad + \frac{1}{\sigma^3} \sum_{i=n_{1}+1}^{n_2} 
				\left(\frac{1}{r_2^3}\left(\frac{12}{e}-2\right) + \frac{4}{er_2^3 }\right) P^2(N \ge i) \nn\\[4pt]
				&\quad + \frac{1}{\sigma^3} \sum_{i=n_{2}+1}^{n_3} 
				\left(\frac{1}{r_3^3}\left(\frac{12}{e}-2\right) + \frac{4}{er_3^3 }  \right) P^2(N \ge i)\nn \\[4pt]
				&\quad + \frac{1}{\sigma^3} \sum_{i=n_{3}+1}^{\infty} 
				\left(\frac{1}{r_4^3}\left(\frac{12}{e}-2\right) + \frac{4}{er_4^3 }  \right) P^2(N \ge i)\nn \\[4pt]
				&\quad + \frac{2\sqrt{2}}{\sqrt{\pi}} 
				\Bigg(1 -\sum_{i=1}^{n_1}\frac{1}{r^2_1\sigma^2} \, {P}^2(N \ge i) -\sum_{i=n_1+1}^{n_2}\frac{1}{r^2_2\sigma^2} \, {P}^2(N \ge i) \nn\\
				&\quad\quad\quad\quad-\sum_{i=n_2+1}^{n_3}\frac{1}{r^2_3\sigma^2} \, {P}^2(N \ge i)-\sum_{i=n_3+1}^{\infty}\frac{1}{r^2_4\sigma^2} \, {P}^2(N \ge i)\Bigg),
			\end{align}
			where $Z$ is a standard normal random variable.

		For the simulations and the computation of the upper bounds in \eqref{sim:AI1}, we set
		\begin{comment}\[
		m_1 = 0.2~\text{s}, \quad 
		m_2 = 1~\text{s}, \quad 
		m_3 = 5~\text{s}, \quad 
		m_4 = 20~\text{s},
		\]
		where \(m_j\) is the mean wait time in group \(j\) in second. These correspond to the rates \end{comment}
		\[
		r_1 = 5, \quad 
		r_2 = 1, \quad 
		r_3 = 0.2, \quad 
		r_4 = 0.05.
		\]
We choose $r_1>r_2>r_3>r_4$  so that the mean waiting time increases across the groups. We select threshold values 
$n_1 = 25{,}000$, $n_2 = 50{,}000$, and $n_3 = 100{,}000$ according to ChatGPT.
		\begin{comment}    
		Let \( N \sim \mathrm{Poisson}(\lambda) \), where
		\begin{multline*}
			\lambda \in \{2^k : 1 \le k \le 14\}
			\cup \{19384, 22384, 25384,  32768, 40168, 47568\} \\
			\cup \{54968, 65536, 85536\}
			\cup \{105536, 125536, 131072, 262144, 524288\}.
		\end{multline*}
		\todoA{Maybe adjust the way the values of $\lambda$ are presented.}
        \end{comment}
Let \(N \sim \mathrm{Poisson}(\lambda)\), where
\[
\lambda \in \{2^k : 1 \le k \le 19\}
\cup
\{19384, 22384, 25384, 40168, 47568, 54968,
85536, 105536, 125536\}.
\]
Now we perform a simulation to examine how our bound in \eqref{sim:AI1} behaves compared with the simulated distance between the normalized $W$ and $Z$ as shown in Table \ref{tab:AI}. Note that we intentionally include additional values of $\lambda$ such that the means of the Poisson distribution are around the thresholds, allowing us to examine how the bounds behave near these thresholds.

		\begin{table}[htbp]
			\centering
			\begin{tabular}{|c|c|c|c|c|c|}
				\hline
				$\lambda$ & Upper bounds & Simulated distance &  $\lambda$ & Upper bounds & Simulated distance \\
				\hline
				2      & 2.303442184 & 0.211796991 & 19384  & 0.034265310& 0.002632326 \\
				4      & 1.846643308 & 0.158557773 & 22384  & 0.031893858& 0.002511350
                \\
				8      & 1.417833461 & 0.114162092 & 25384  & 0.140498287& 0.004407382 \\
				16     & 1.059220795 & 0.080924424 & 32768  & 0.055818578& 0.003344486 \\
				32     & 0.777429162 & 0.057079029 & 40168  & 0.039996452& 0.002554959 \\
				64     & 0.563964349 & 0.040386101 & 47568  & 0.032653545& 0.002321277 \\
				128    & 0.405901600 & 0.028594952 & 54968  & 0.077168816& 0.003802871 \\
				256    & 0.290573182 & 0.020198373 & 65536  & 0.042298414& 0.002611052 \\
				512    & 0.207243911 & 0.014301147 & 85536  & 0.026961063& 0.002003061 \\
				1024   & 0.147431924 & 0.010144281 & 105536 & 0.062300434& 0.002748798 \\
				2048   & 0.104694096 & 0.007218810 & 125536 & 0.031792132& 0.002210619 \\
				4096   & 0.074251823 & 0.005129138 & 131072 & 0.028769748& 0.001954450 \\
				8192   & 0.052614898 & 0.003643280 & 262144 & 0.012148304& 0.001521695 \\
				16384  & 0.037259807 & 0.002747920  & 524288 & 0.007406913& 0.001393969
                \\
				\hline
			\end{tabular}
			\caption{The comparison of theoretical upper bounds and simulated distances for different values of $\lambda$. The simulated distance is computed from 1,000,000 repeated simulations.}
            \label{tab:AI}
		\end{table}

        \newpage
        \begin{comment}

		The following table presents the bounds obtained from simulations.
		\begin{table}[htbp]
			\centering
			\begin{tabular}{|c|c|c|c|}
				\hline
				$\lambda$ & Simulation upper bounds & $\lambda$ & Simulation upper bounds \\
				\hline
				2      & 0.211796991 & 16384  & 0.002747920 \\
				4      & 0.158557773 & 19384  & 0.002632326 \\
				8      & 0.114162092 & 22384  & 0.002511350 \\
				16     & 0.080924424 & 25384  & 0.004407382 \\
				32     & 0.057079029 & 32768  & 0.003344486 \\
				64     & 0.040386101 & 40168  & 0.002554959 \\
				128    & 0.028594952 & 47568  & 0.002321277 \\
				256    & 0.020198373 & 54968  & 0.003802871 \\
				512    & 0.014301147 & 65536  & 0.002611052 \\
				1024   & 0.010144281 & 85536  & 0.002003061 \\
				2048   & 0.007218810 & 105536 & 0.002748798 \\
				4096   & 0.005129138 & 125536 & 0.002210619 \\
				8192   & 0.003643280 & 131072 & 0.001954450 \\
				16384  & 0.002747920 & 262144 & 0.001521695 \\
				19384  & 0.002632326 & 524288 & 0.001393969 \\
				\hline
			\end{tabular}
			\caption{Simulation results for different values of $\lambda$. For each value of $\lambda$, we generate 1,000,000 samples and repeat this 100 times, then average the results.}
		\end{table}
		\end{comment}
        
		%The following plot shows the simulation results. The graph exhibits three jumps as it exceeds the threshold values. A zoomed-in view around these thresholds is shown in the next figure.
        
Figure \ref{fig:AI} shows the simulation results. The graph exhibits three jumps around the threshold values $n_1,n_2,$ and $n_3$, with zoomed-in views shown in Figure \ref{fig:AI2}. These jumps are also reflected in the bound in  \eqref{sim:AI1}. When $\lambda$ is close to one of the thresholds, $N$ is more likely to exceed the threshold, which results in more effects from the second to the fourth terms in \eqref{sim:AI1}. Moreover, since $r_1>r_2>r_3>r_4,$
we have $\frac{1}{r_1}<\frac{1}{r_2}<\frac{1}{r_3}<\frac{1}{r_4},$
so the mean waiting time increases across the groups, making the jumps more pronounced.

		\begin{figure}[H]  
			\centering
            \includegraphics[width=0.8\textwidth]{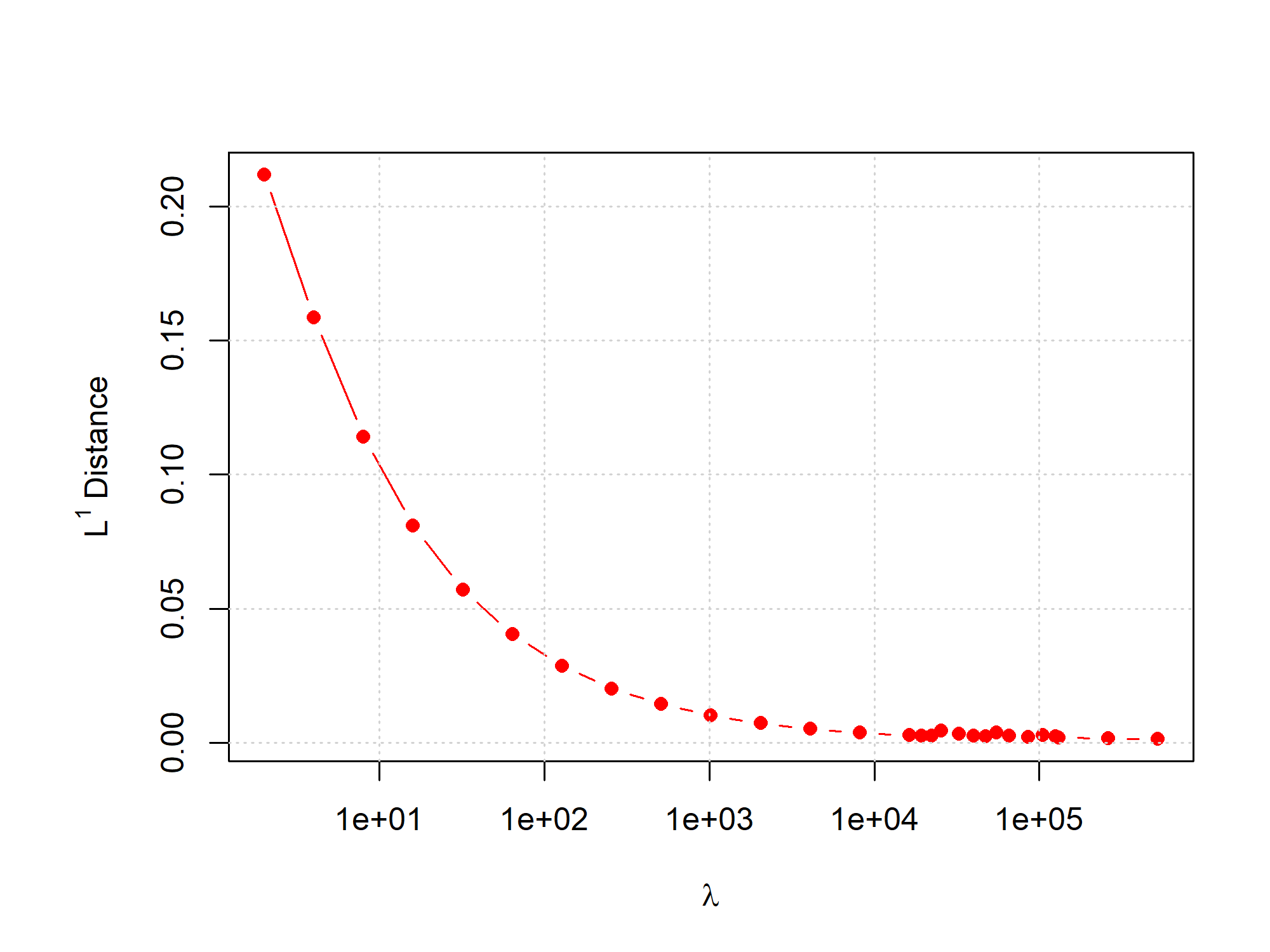}
            \caption{Simulated distances as functions of $\lambda$, plotted on a logarithmic scale with consistent $x$-axis spacing.}
            \label{fig:AI}
		\end{figure}

\begin{figure}[H]  
			\centering
            \includegraphics[width=0.8\textwidth]{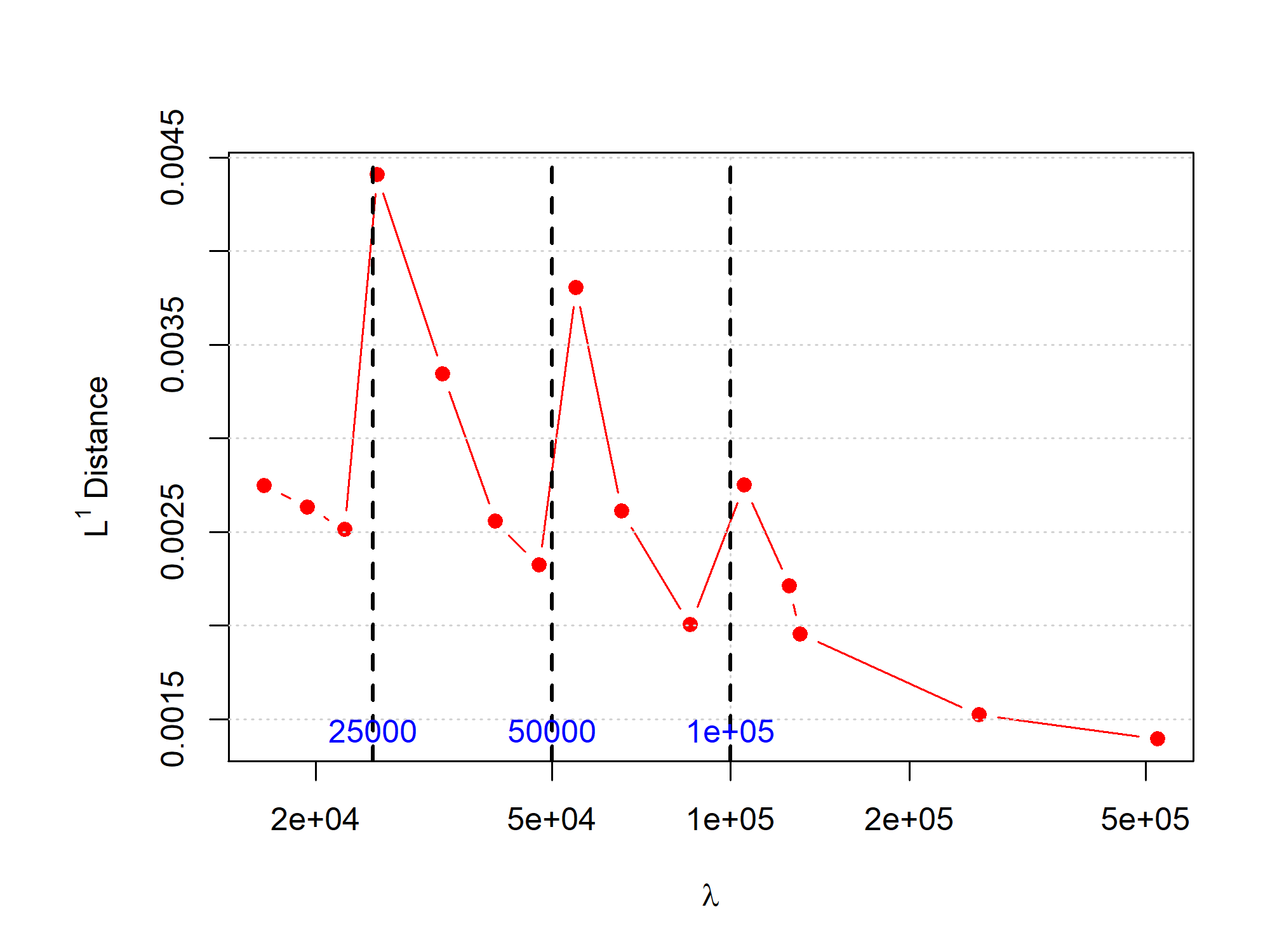}
            \caption{Zoomed-in view around selected thresholds $n_1 = 25{,}000$, $n_2 = 50{,}000$, and $n_3 = 100{,}000$. The dashed vertical lines indicate these points, where noticeable changes
in the simulated distance can be observed. The figure is based on simulation results.}
        \label{fig:AI2}
		\end{figure}

Figure \ref{fig:AI3} compares the simulated values with the theoretical upper bounds. The jumps in the theoretical upper bound are also clearly visible in the plot. Therefore, the simulated results are consistent with the theoretical upper bounds.

		\begin{figure}[H]  % 'h' means here, but you can use 't' for top, 'b' for bottom, etc.
			\centering
            \includegraphics[width=0.8\textwidth]{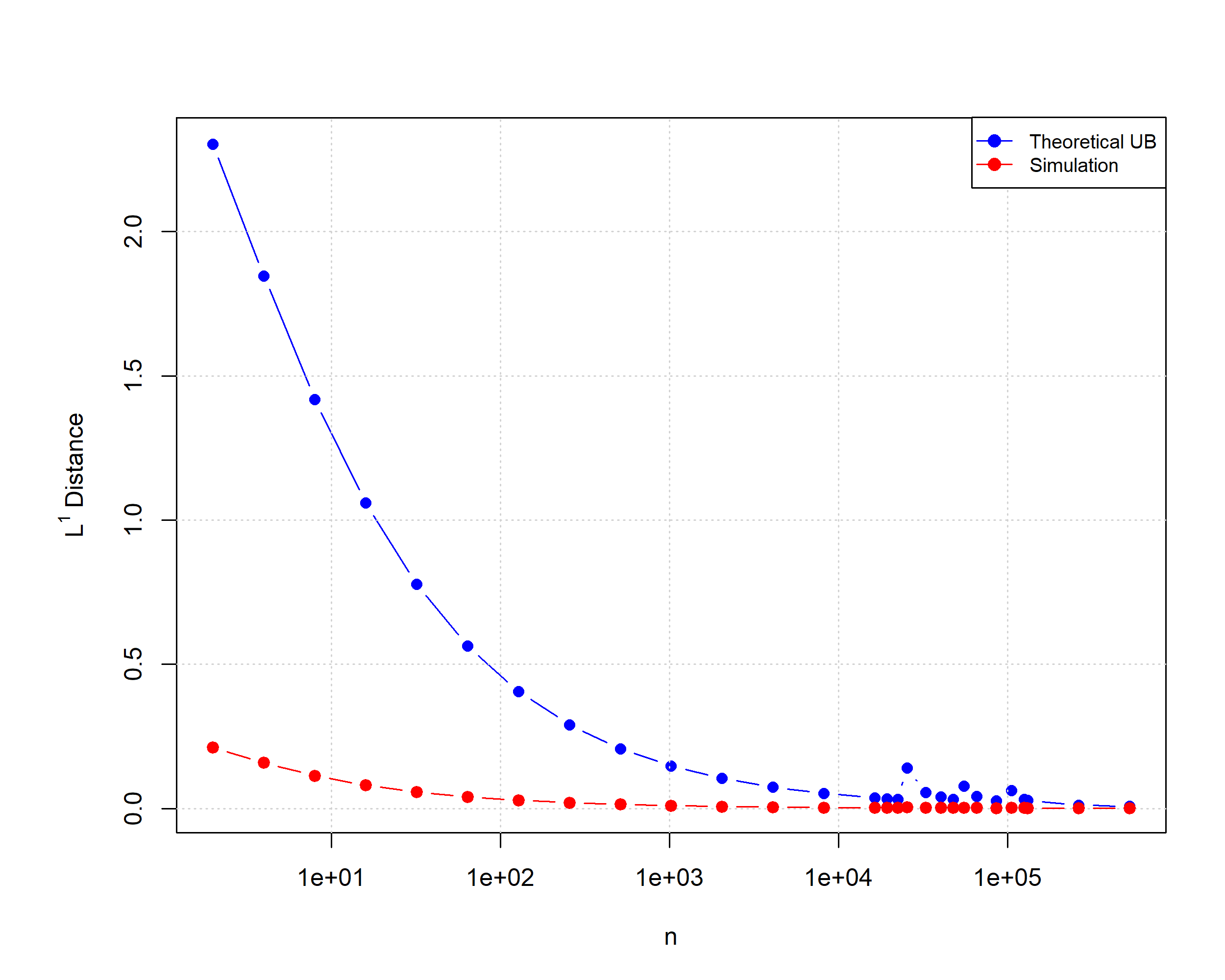}
            \caption{Simulated and theoretical upper bounds as functions of $\lambda$, plotted on a logarithmic scale with a consistent $x$-axis spacing.}
            \label{fig:AI3}
		\end{figure}

\section{Conclusion}
\label{sec:sum}

The main contribution of our work is a new upper bound for the $L^1$ distance between a random sum and the standard normal. Although there are comparable results in the literature, our approach is new, and the assumption is different. We apply Stein's method through an approximate zero bias transformation to obtain the bound of the rate \ncolor{$1/\sqrt{n}$} which is the same as the existing results but our constant is larger. However, we drop the identical assumption of the summands. This relaxation is beneficial in real applications since this assumption is often violated. 

We apply our main results to three different real-world cases, namely, simple random sampling with outliers, auto insurance claims, and generative AI wait times. In the latter two cases, we consider settings in which the summands $X_i$ are not identically distributed. Although the actual distance and the corresponding bound are not close when the mean number of summands is small, they tend to be closely aligned as the mean becomes larger. This discrepancy is primarily due to the large constant in the bound, which represents an important direction for future research. These examples demonstrate that relaxing the identical-distribution assumption allows our results to be applied to a broader range of real-world problems, where identical summands are uncommon. Finally, our proposed bounds can be similarly applied to other real-world problems in which the summands are not necessarily identically distributed.

\backmatter

\bmhead{Acknowledgements}

Both authors would like to thank Chutiphan Charoensuk, a research assistant supported by the Grants for Sci Seeding, Faculty of Science, Chulalongkorn University, for assistance with data preparation and numerical simulations. Nathakhun would like to also thank Dawud Thongtha for a fruitful conversation during the initial stage of this project.

%Acknowledgements are not compulsory. Where included they should be brief. Grant or contribution numbers may be acknowledged.

%Please refer to Journal-level guidance for any specific requirements.

\begin{comment}
\bigskip\noindent

\bigskip\noindent
{\bf Data Availability} The artificial datasets used in this study can be obtained from the corresponding author upon request. The PEA dataset is confidential.

\bigskip\noindent
{\bf Ethics approval} The project did not involve ant studies with human or animal participants.

\bigskip\noindent
{\bf Conflict of interest} The authors declare no competing interest.

%%===================================================%%
%% For presentation purpose, we have included        %%
%% \bigskip command. Please ignore this.             %%
%%===================================================%%
\begin{comment}

\bigskip
\begin{flushleft}%
Editorial Policies for:

\bigskip\noindent
Springer journals and proceedings: \url{https://www.springer.com/gp/editorial-policies}

\bigskip\noindent
Nature Portfolio journals: \url{https://www.nature.com/nature-research/editorial-policies}

\bigskip\noindent
\textit{Scientific Reports}: \url{https://www.nature.com/srep/journal-policies/editorial-policies}

\bigskip\noindent
BMC journals: \url{https://www.biomedcentral.com/getpublished/editorial-policies}
\end{flushleft}
\end{comment}

\section*{Declarations}

\noindent\textbf{Declaration of generative AI in the writing process:}
During the preparation of this manuscript, the authors used ChatGPT (OpenAI) solely for language editing, grammar checking, and improving the clarity of the manuscript. The authors reviewed and approved all AI-assisted text and take full responsibility for the final content of the manuscript.

\noindent\textbf{Funding:} Not applicable.

\noindent\textbf{Conflict of interest/Competing interests:} The author declares no conflict of interest.

\noindent\textbf{Data availability:} Data are available upon request.

%\begin{appendices}

%\end{appendices}

%%===========================================================================================%%
%% If you are submitting to one of the Nature Portfolio journals, using the eJP submission   %%
%% system, please include the references within the manuscript file itself. You may do this  %%
%% by copying the reference list from your .bbl file, paste it into the main manuscript .tex %%
%% file, and delete the associated \verb+\bibliography+ commands.                            %%
%%===========================================================================================%%

\bibliography{sn-bibliography}% common bib file

%% BioMed_Central_Bib_Style_v1.01

\begin{thebibliography}{20}
% BibTex style file: bmc-mathphys.bst (version 2.1), 2014-07-24
\ifx \bisbn   \undefined \def \bisbn  #1{ISBN #1}\fi
\ifx \binits  \undefined \def \binits#1{#1}\fi
\ifx \bauthor  \undefined \def \bauthor#1{#1}\fi
\ifx \batitle  \undefined \def \batitle#1{#1}\fi
\ifx \bjtitle  \undefined \def \bjtitle#1{#1}\fi
\ifx \bvolume  \undefined \def \bvolume#1{\textbf{#1}}\fi
\ifx \byear  \undefined \def \byear#1{#1}\fi
\ifx \bissue  \undefined \def \bissue#1{#1}\fi
\ifx \bfpage  \undefined \def \bfpage#1{#1}\fi
\ifx \blpage  \undefined \def \blpage #1{#1}\fi
\ifx \burl  \undefined \def \burl#1{\textsf{#1}}\fi
\ifx \doiurl  \undefined \def \doiurl#1{\url{https://doi.org/#1}}\fi
\ifx \betal  \undefined \def \betal{\textit{et al.}}\fi
\ifx \binstitute  \undefined \def \binstitute#1{#1}\fi
\ifx \binstitutionaled  \undefined \def \binstitutionaled#1{#1}\fi
\ifx \bctitle  \undefined \def \bctitle#1{#1}\fi
\ifx \beditor  \undefined \def \beditor#1{#1}\fi
\ifx \bpublisher  \undefined \def \bpublisher#1{#1}\fi
\ifx \bbtitle  \undefined \def \bbtitle#1{#1}\fi
\ifx \bedition  \undefined \def \bedition#1{#1}\fi
\ifx \bseriesno  \undefined \def \bseriesno#1{#1}\fi
\ifx \blocation  \undefined \def \blocation#1{#1}\fi
\ifx \bsertitle  \undefined \def \bsertitle#1{#1}\fi
\ifx \bsnm \undefined \def \bsnm#1{#1}\fi
\ifx \bsuffix \undefined \def \bsuffix#1{#1}\fi
\ifx \bparticle \undefined \def \bparticle#1{#1}\fi
\ifx \barticle \undefined \def \barticle#1{#1}\fi
\bibcommenthead
\ifx \bconfdate \undefined \def \bconfdate #1{#1}\fi
\ifx \botherref \undefined \def \botherref #1{#1}\fi
\ifx \url \undefined \def \url#1{\textsf{#1}}\fi
\ifx \bchapter \undefined \def \bchapter#1{#1}\fi
\ifx \bbook \undefined \def \bbook#1{#1}\fi
\ifx \bcomment \undefined \def \bcomment#1{#1}\fi
\ifx \oauthor \undefined \def \oauthor#1{#1}\fi
\ifx \citeauthoryear \undefined \def \citeauthoryear#1{#1}\fi
\ifx \endbibitem  \undefined \def \endbibitem {}\fi
\ifx \bconflocation  \undefined \def \bconflocation#1{#1}\fi
\ifx \arxivurl  \undefined \def \arxivurl#1{\textsf{#1}}\fi
\csname PreBibitemsHook\endcsname

%%% 1
\bibitem[\protect\citeauthoryear{Barnett and Lewis}{1994}]{barnett1994outliers}
\begin{bbook}
\bauthor{\bsnm{Barnett}, \binits{V.}},
\bauthor{\bsnm{Lewis}, \binits{T.}}:
\bbtitle{Outliers in Statistical Data},
\bedition{3}rd edn.
\bpublisher{John Wiley \& Sons},
\blocation{Chichester}
(\byear{1994})
\end{bbook}
\endbibitem

%%% 2
\bibitem[\protect\citeauthoryear{Barbour and Xia}{2006}]{barbour2006normal}
\begin{barticle}
\bauthor{\bsnm{Barbour}, \binits{A.D.}},
\bauthor{\bsnm{Xia}, \binits{A.}}:
\batitle{Normal approximation for random sums}.
\bjtitle{Advances in applied probability}
\bvolume{38}(\bissue{3}),
\bfpage{693}--\blpage{728}
(\byear{2006})
\end{barticle}
\endbibitem

%%% 3
\bibitem[\protect\citeauthoryear{Chen et~al.}{2011}]{CGS11}
\begin{bbook}
\bauthor{\bsnm{Chen}, \binits{L.H.Y.}},
\bauthor{\bsnm{Goldstein}, \binits{L.}},
\bauthor{\bsnm{Shao}, \binits{Q.-M.}}:
\bbtitle{Normal Approximation by Stein's Method}.
\bpublisher{Springer},
\blocation{New York}
(\byear{2011})
\end{bbook}
\endbibitem

%%% 4
\bibitem[\protect\citeauthoryear{Chaidee and Neammanee}{2008}]{chaidee2008berry}
\begin{bchapter}
\bauthor{\bsnm{Chaidee}, \binits{N.}},
\bauthor{\bsnm{Neammanee}, \binits{K.}}:
\bctitle{Berry--esseen bound for independent random sum via stein’s method}.
In: \bbtitle{Int. Math. Forum},
vol. \bseriesno{3},
pp. \bfpage{721}--\blpage{738}
(\byear{2008})
\end{bchapter}
\endbibitem

%%% 5
\bibitem[\protect\citeauthoryear{Daly}{2022}]{Dal22}
\begin{barticle}
\bauthor{\bsnm{Daly}, \binits{F.}}:
\batitle{Gamma, gaussian and poisson approximations for random sums using size-biased and generalized zero-biased couplings}.
\bjtitle{Scandinavian Actuarial Journal}
\bvolume{6},
\bfpage{471}--\blpage{487}
(\byear{2022})
\end{barticle}
\endbibitem

%%% 6
\bibitem[\protect\citeauthoryear{Denuit et~al.}{2007}]{denuit2007actuarial}
\begin{bbook}
\bauthor{\bsnm{Denuit}, \binits{M.}},
\bauthor{\bsnm{Mar{\'e}chal}, \binits{X.}},
\bauthor{\bsnm{Pitrebois}, \binits{S.}},
\bauthor{\bsnm{Walhin}, \binits{J.-F.}}:
\bbtitle{Actuarial Modelling of Claim Counts: Risk Classification, Credibility and Bonus-malus Systems}.
\bpublisher{John Wiley \& Sons}, \blocation{???}
(\byear{2007})
\end{bbook}
\endbibitem

%%% 7
\bibitem[\protect\citeauthoryear{Dobriban}{2025}]{dobriban2025statistical}
\begin{botherref}
\oauthor{\bsnm{Dobriban}, \binits{E.}}:
Statistical methods in generative ai.
arXiv preprint arXiv:2509.07054
(2025)
\end{botherref}
\endbibitem

%%% 8
\bibitem[\protect\citeauthoryear{El~Karoui and Jiao}{2009}]{EKJ09}
\begin{barticle}
\bauthor{\bsnm{El~Karoui}, \binits{N.}},
\bauthor{\bsnm{Jiao}, \binits{Y.}}:
\batitle{Stein’s method and zero bias transformation for cdo tranche pricing}.
\bjtitle{Finance and Stochastics}
\bvolume{13},
\bfpage{151}--\blpage{180}
(\byear{2009})
\end{barticle}
\endbibitem

%%% 9
\bibitem[\protect\citeauthoryear{Gnedenko and Korolev}{1996}]{gnedenko2020random}
\begin{bbook}
\bauthor{\bsnm{Gnedenko}, \binits{B.V.}},
\bauthor{\bsnm{Korolev}, \binits{V.Y.}}:
\bbtitle{Random Summation: Limit Theorems and Applications}.
\bpublisher{CRC press}, \blocation{???}
(\byear{1996})
\end{bbook}
\endbibitem

%%% 10
\bibitem[\protect\citeauthoryear{Goldstein and Reinert}{1997}]{GR97}
\begin{barticle}
\bauthor{\bsnm{Goldstein}, \binits{L.}},
\bauthor{\bsnm{Reinert}, \binits{G.}}:
\batitle{Stein’s method and the zero bias transformation with application to simple random sampling}.
\bjtitle{Annals of Applied Probability}
\bvolume{7},
\bfpage{935}--\blpage{952}
(\byear{1997})
\end{barticle}
\endbibitem

%%% 11
\bibitem[\protect\citeauthoryear{Jantai and Wiroonsri}{2025}]{JW25}
\begin{barticle}
\bauthor{\bsnm{Jantai}, \binits{W.}},
\bauthor{\bsnm{Wiroonsri}, \binits{N.}}:
\batitle{On combinatorial central limit theorems with different underlying permutations via approximate zero biasing: W. jantai, n. wiroonsri}.
\bjtitle{Sankhya A}
\bvolume{87}(\bissue{2}),
\bfpage{261}--\blpage{301}
(\byear{2025})
\end{barticle}
\endbibitem

%%% 12
\bibitem[\protect\citeauthoryear{Klugman et~al.}{2012}]{klugman2012loss}
\begin{bbook}
\bauthor{\bsnm{Klugman}, \binits{S.A.}},
\bauthor{\bsnm{Panjer}, \binits{H.H.}},
\bauthor{\bsnm{Willmot}, \binits{G.E.}}:
\bbtitle{Loss Models: From Data to Decisions},
\bedition{4}th edn.
\bpublisher{John Wiley \& Sons},
\blocation{Hoboken, NJ}
(\byear{2012})
\end{bbook}
\endbibitem

%%% 13
\bibitem[\protect\citeauthoryear{R{\'e}nyi}{1963}]{renyi1963central}
\begin{barticle}
\bauthor{\bsnm{R{\'e}nyi}, \binits{A.}}:
\batitle{On the central limit theorem for the sum of a random number of independent random variables}.
\bjtitle{Acta Mathematica Academiae Scientiarum Hungarica}
\bvolume{11}(\bissue{1}),
\bfpage{97}--\blpage{102}
(\byear{1963})
\end{barticle}
\endbibitem

%%% 14
\bibitem[\protect\citeauthoryear{Robbins}{1948}]{robbins1948asymptotic}
\begin{barticle}
\bauthor{\bsnm{Robbins}, \binits{H.}}:
\batitle{The asymptotic distribution of the sum of a random number of random variables}.
\bjtitle{Bulletin of the American Mathematical Society}
\bvolume{54}(\bissue{12}),
\bfpage{1151}--\blpage{1161}
(\byear{1948})
\end{barticle}
\endbibitem

%%% 15
\bibitem[\protect\citeauthoryear{Ross}{2011}]{Ross11}
\begin{barticle}
\bauthor{\bsnm{Ross}, \binits{N.}}:
\batitle{Fundamentals of {S}tein's method}.
\bjtitle{Probability Surveys}
\bvolume{8},
\bfpage{210}--\blpage{293}
(\byear{2011})
\doiurl{10.1214/11-PS182}
\end{barticle}
\endbibitem

%%% 16
\bibitem[\protect\citeauthoryear{Stein}{1972}]{Stein72}
\begin{bchapter}
\bauthor{\bsnm{Stein}, \binits{C.}}:
\bctitle{A bound for the error in the normal approximation to the distribution of a sum of dependent random variables}.
In: \bbtitle{Proceedings of the Sixth Berkeley Symposium on Mathematical Statistics and Probability, Volume 2: Probability Theory},
vol. \bseriesno{6},
pp. \bfpage{583}--\blpage{603}
(\byear{1972}).
\bcomment{University of California Press}
\end{bchapter}
\endbibitem

%%% 17
\bibitem[\protect\citeauthoryear{Sunklodas}{2015}]{sunklodas2015normal}
\begin{barticle}
\bauthor{\bsnm{Sunklodas}, \binits{J.K.}}:
\batitle{On the normal approximation of a negative binomial random sum}.
\bjtitle{Lithuanian Mathematical Journal}
\bvolume{55}(\bissue{1}),
\bfpage{150}--\blpage{158}
(\byear{2015})
\end{barticle}
\endbibitem

%%% 18
\bibitem[\protect\citeauthoryear{Sun}{2018}]{Xiaomeng}
\begin{botherref}
\oauthor{\bsnm{Sun}, \binits{X.}}:
Car Insurance Claim Data.
Kaggle
(2018).
\url{https://www.kaggle.com/datasets/xiaomengsun/car-insurance-claim-data}
\end{botherref}
\endbibitem

%%% 19
\bibitem[\protect\citeauthoryear{Wiroonsri}{2017}]{NW2017}
\begin{barticle}
\bauthor{\bsnm{Wiroonsri}, \binits{N.}}:
\batitle{Stein's method using approximate zero bias couplings with applications to combinatorial central limit theorems under the ewens distribution}.
\bjtitle{ALEA}
\bvolume{14},
\bfpage{917}--\blpage{946}
(\byear{2017})
\end{barticle}
\endbibitem

%%% 20
\bibitem[\protect\citeauthoryear{Yonghint and Jantai}{2025}]{YonghintJantai2025}
\begin{barticle}
\bauthor{\bsnm{Yonghint}, \binits{N.}},
\bauthor{\bsnm{Jantai}, \binits{W.}}:
\batitle{Poisson approximation for stop-loss metrics of order 1 and 2}.
\bjtitle{Sankhya A}
\bvolume{87},
\bfpage{302}--\blpage{326}
(\byear{2025})
\doiurl{10.1007/s13171-025-00399-5}
\end{barticle}
\endbibitem

\end{thebibliography}
%% if required, the content of .bbl file can be included here once bbl is generated
%%\input sn-article.bbl

\end{document}